\def\er{{\mathbb R}}
\def\Pr{{\mathbb P}}
\def\Ex{{\mathbb E}}
\def\ve{\varepsilon}
\def\Var{\mathrm{Var}}
\DeclareMathOperator*{\argmax}{arg\,max}
\newcommand{\mc}[1]{\mathcal{{#1}}}
\newcommand{\1}{\mathbf{{1}}}
\newtheorem{theorem}{Theorem}[section]
\newtheorem{lemma}[theorem]{Lemma}
\newtheorem{fact}[theorem]{Fact}
\newtheorem*{remark}{Remark}
\newtheorem{corollary}[theorem]{Corollary}
\theoremstyle{definition}
\begin{document}

       \title {Comparing moments of real log-concave random variables}

       \author{Daniel Murawski \thanks{The author was supported by the National Science Centre, Poland, grant 2018/31/D/ST1/01355.}}

    \maketitle

\begin{abstract}
We show that for every mean zero log-concave real random variable $X$ one has $\|X\|_p \leq \frac{p}{q} \|X\|_q$ for $p \geq q \geq 1$, going beyond the well-known case of symmetric random variables. We also prove that in the class of arbitrary log-concave real random variables for $p>q > 0$ the quantity $\|X\|_p / \|X\|_q$ is maximized for some shifted exponential distribution. Building upon this we  derive the bound  $\|X\|_p \leq C_0 \frac{p}{q} \|X\|_q$ for arbitrary log-concave $X$, with best possible absolute constant $C_0=e^{W(1/e)} \approx 1.3211$ in front of $\frac{p}{q}$, where $W$ stands for the Lambert function. 
\end{abstract}

\section{Introduction}

The study of moments of random variables plays an important role in probability theory. In particular, in concentration of measure theory, convex geometry and in the probabilistic Banach spaces theory one is often interested in moment comparison inequalities, that is bounds of the form $\|X\|_p \leq C_{p,q,X} \|X\|_q$ with $p \geq q >0$, see \cite{BGVV}, \cite{L01} and \cite{LT11}. Here for a real random variable $X$ we define $\|X\|_p= (\Ex |X|^p)^{1/p}$ and if $X$ instead of being real has values in a certain Banach space, the absolute value in the above definition has to be replaced with the corresponding Banach space norm. As an example let us mention the famous Khintchine inequality, in which case one considers $X$ being a sum of independent symmetric two-point random variables, leading to a constant $C_{p,q}$ independent on the number of summands, see   \cite{K23}.

Clearly it is not possible to derive a universal bound of the form  $\|X\|_p \leq C_{p,q} \|X\|_q$ for $p>q>0$ if one considers the  class of  all real random variables. However, such an inequality is true in the class of all positive random variables with log-concave tails, that is, for which the function $t \mapsto \Pr(X > t)$ is log-concave. Barlow, Marshall and Proschan proved  the following theorem, see \cite{BMP}.

\begin{theorem}\label{thm:sym}
If $X$ is a positive or symmetric real random variable with log-concave
tails and $E$ has exponential distribution with parameter 1, then the function $p \mapsto \|X\|_p / \|E\|_p$ is nonincreasing on $(0,\infty)$, that is, for $p \geq q > 0$ we have $\|X\|_p \leq C_{p,q} \|X\|_q$ with $C_{p,q}=\|E\|_p / \|E\|_q$, where $\|E\|_p = \Gamma(p+1)^{1/p}$.
\end{theorem}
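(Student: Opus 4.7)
The plan is a sign-change argument comparing the survival function of $X$ to that of a standard exponential, carried out after moment matching at level $q$. First I would reduce to the case $X\geq 0$: if $X$ is symmetric with log-concave tails, then $|X|$ is positive with $\Pr(|X|>t)=2\Pr(X>t)$ for $t>0$, hence still log-concave (a constant factor preserves log-concavity), and $\|X\|_p=\||X|\|_p$. So we may assume $X>0$ a.s.\ with log-concave survival function $\bar{F}(t):=\Pr(X>t)$ satisfying $\bar{F}(0^{+})=1$. Since the ratio $\|X\|_p/\|X\|_q$ is homogeneous of degree zero in $X$, it suffices to show that for every $p\geq q>0$ and every such $X$ normalized so that $\|X\|_q=\|E\|_q$, one has $\|X\|_p\leq\|E\|_p$.

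Set $g(t):=\bar{F}(t)-e^{-t}$ and $h(t):=\log\bar{F}(t)+t$. Then $h$ is concave on $[0,\infty)$ with $h(0)=0$, which forces $h(t)/t$ to be nonincreasing on $(0,\infty)$; consequently there exists $t^{*}\in[0,\infty]$ such that $h>0$ on $(0,t^{*})$ and $h<0$ on $(t^{*},\infty)$, and the same sign pattern is inherited by $g=e^{-t}(e^{h}-1)$. The normalization $\int_0^{\infty}t^{q-1}g(t)\,dt=0$ (equivalent to $\|X\|_q=\|E\|_q$) then forces $t^{*}\in(0,\infty)$, for otherwise $g$ would have constant sign and would have to vanish identically, making $X\sim E$ and the statement trivial.

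The proof concludes by the classical weighted-integral trick. Since $t\mapsto t^{p-q}$ is nondecreasing and $g$ switches sign at $t^{*}$ from $+$ to $-$, one checks that $t^{p-q}g(t)\leq(t^{*})^{p-q}g(t)$ for every $t>0$ (the inequality holds on $(0,t^{*})$ because $g>0$ and $t^{p-q}\leq(t^{*})^{p-q}$, and on $(t^{*},\infty)$ because $g<0$ and $t^{p-q}\geq(t^{*})^{p-q}$). Multiplying by $t^{q-1}$ and integrating,
\[
\|X\|_p^p-\|E\|_p^p=p\!\int_0^{\infty}\!t^{p-q}\,t^{q-1}g(t)\,dt\;\leq\;p(t^{*})^{p-q}\!\int_0^{\infty}\!t^{q-1}g(t)\,dt\;=\;0,
\]
giving $\|X\|_p\leq\|E\|_p$. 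The main technical point to watch is the sign-change lemma itself: one should carefully handle boundary behaviors (a possible atom of $X$ at $0$, the case $h'(0^{+})\leq 0$, or $\bar{F}$ vanishing on some $[T,\infty)$) so as to confirm rigorously that the moment constraint forces exactly one crossing of $g$ rather than constant sign.
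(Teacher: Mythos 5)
The paper does not give a proof of Theorem~\ref{thm:sym}; it is quoted from Barlow, Marshall and Proschan \cite{BMP}, so there is no in-paper argument to compare against. Your proof is correct and is in fact the classical route: reduce to nonnegative $X$, observe that $h(t)=\log\bar F(t)+t$ is concave with $h(0)=0$, deduce a single $+\to-$ sign change of $g=\bar F-e^{-t}$ at some $t^{*}$, use the moment normalization at level $q$ to place $t^{*}\in(0,\infty)$, and finish with the Chebyshev-type inequality $t^{p-q}g(t)\le (t^{*})^{p-q}g(t)$. All of the displayed manipulations check out (in particular $\|X\|_p^p-\|E\|_p^p=p\int_0^\infty t^{p-1}g\,dt$ and $\int_0^\infty t^{q-1}g\,dt=0$).

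One caveat you flag deserves a stronger statement: the possibility of an atom at $0$ is not merely a boundary detail but a genuine hypothesis, since the conclusion fails without $\bar F(0^{+})=1$. For example, if $\Pr(X=0)=\tfrac12$ and $\Pr(X>t)=\tfrac12 e^{-t}$ for $t>0$, then $\bar F$ is log-concave on $(0,\infty)$ yet $\|X\|_p/\|E\|_p=2^{-1/p}$ is \emph{increasing} in $p$. The reason your argument breaks is precisely the point you identify: with $h(0)<0$, concavity of $h$ no longer forces $h(t)/t$ to be nonincreasing, so the single $+\to-$ crossing of $g$ is lost (and in the example $g$ is negative near $0$, positive in the middle, and negative again at infinity). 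So the implicit hypothesis is $X>0$ a.s.\ (equivalently $\bar F(0^{+})=1$), which is automatic in the paper's applications where $X$ is log-concave and hence atomless. Apart from making this hypothesis explicit, your write-up is a complete and correct proof by the standard single-crossing method.
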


A real random variables $X$ is called log-concave if $X$ is constant a.s. or $X$ has a density of the form $e^{-V}$, where $V:\er \to \er \cup \{+\infty\}$ is convex \cite{Borell}. Since due to Pr\'ekopa-Leindler inequality every log-concave random variable has log-concave tails, the above theorem is valid also in the class of log-concave symmetric (or positive) real random variables. The constant $C_{p,q}$ is of order $p/q$ for large $p,q$. In fact one always has $C_{p,q} \leq p/q$, see Corollary \ref{gamma}. In this article we show that the inequality $\|X\|_p \leq \frac{p}{q} \|X\|_q$ holds true not only for symmetric log-concave random variables, but also for random variables having mean zero.

\begin{theorem}\label{thm:zeroMean}
Let $X$ be a log-concave real random variable with mean zero. Then for $p \geq q \geq 1$ we have $\|X\|_p \leq \frac{p}{q} \|X\|_q$. 
\end{theorem}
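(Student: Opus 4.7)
The plan is to reduce the inequality to an explicit extremal distribution and then verify the bound on that distribution.

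After rescaling, the claim is $\|X\|_p \leq p/q$ for any log-concave $X$ with mean zero and $\|X\|_q = 1$. I would treat this as a variational problem: maximize $\Ex|X|^p$ over log-concave densities $f = e^{-V}$ on $\er$ subject to $\int f = 1$, $\int x f = 0$, and $\int |x|^q f = 1$. In the spirit of the companion result mentioned in the abstract (where the maximizer of $\|X\|_p/\|X\|_q$ over all log-concave laws is a shifted exponential), a perturbation-of-$V$ argument inside the convex cone of convex potentials, combined with the three linear moment constraints, should force the extremal $V$ to be piecewise affine with few pieces, leading to densities of (possibly two-sided) exponential type. After the mean-zero condition is imposed, the natural candidate extremal is the one-sided shifted exponential
\[
f_0(x) = e^{-(x+1)}\mathbf{1}_{\{x \geq -1\}},
\]
which is log-concave and has mean zero; denote the corresponding random variable by $X_0$.

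Once the extremal is identified, the problem collapses to a concrete one-variable inequality. Direct integration gives
\[
\Ex|X_0|^r = e^{-1}\Bigl(\Gamma(r+1) + \int_0^1 u^r e^u\,du\Bigr),
\]
so one needs $\|X_0\|_p \leq (p/q)\|X_0\|_q$ for $p \geq q \geq 1$. The dominant $\Gamma(r+1)$ piece is already controlled by Corollary \ref{gamma}, which yields $\Gamma(p+1)^{1/p}/\Gamma(q+1)^{1/q} \leq p/q$, and the additional term $\int_0^1 u^r e^u\,du$ is bounded and asymptotically negligible compared to $\Gamma(r+1)$. The cleanest way to close the verification is to show that $r \mapsto \log\|X_0\|_r - \log r$ is nonincreasing on $[1,\infty)$, which can be checked by differentiating under the integral sign and using standard monotonicity estimates for digamma-type integrals.

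The main obstacle is the extremality reduction. Because log-concavity is a nonlinear constraint, Lagrange-multiplier arguments must be applied to the potential $V$ within the cone of convex functions rather than to the density directly; the resulting Euler--Lagrange conditions pin $V$ down to a piecewise affine function with only a handful of breakpoints, yielding a finite-parameter family of shifted, possibly two-sided, exponential densities. Verifying that the mean-zero constraint drives the supremum of $\|X\|_p/\|X\|_q$ to the one-sided boundary case $f_0$, rather than to a two-sided interior configuration, together with handling the accompanying degenerate and limiting cases, is where the bulk of the technical work would lie.
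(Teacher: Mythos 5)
Your plan reduces the theorem to a claimed extremal density $f_0(x) = e^{-(x+1)}\mathbf{1}_{\{x\geq -1\}}$ and then proposes to verify the moment inequality on $f_0$ alone. The central step — that the supremum of $\|X\|_p/\|X\|_q$ over mean-zero log-concave $X$ is attained at $f_0$ — is not something you can take for granted, and in fact the paper explicitly flags it as open. The Fradelizi--Gu\'edon localization you invoke handles inequality constraints on compactly supported laws; applying it with the three constraints (total mass, $q$-th absolute moment, and mean zero, the last an equality so counted as two inequalities) gives extremal potentials that are suprema of up to three affine pieces, not a single one-sided exponential. Eitan's result, quoted in the introduction, shows exactly what such a reduction buys: the candidates are the two-parameter family $\Gamma^s = s\Gamma - (1-s)\Gamma'$ (convolutions of a shifted exponential with a reversed independent copy), and the statement that the maximum over $s\in[0,1]$ is attained at $s\in\{0,1\}$ — i.e., at your $f_0$ — remains a conjecture, verified only for $q<p<100$. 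So the step you describe as ``the bulk of the technical work'' is not merely technical; it is an unresolved problem, and your proof plan would establish the (sharper, conjectural) extremal-characterization rather than the stated inequality.

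The paper avoids this obstruction entirely. It never identifies the extremal. Instead it uses Gr\"unbaum's inequality to get $\Pr(X<0)\geq e^{-1}$, splits $X$ into its conditional distributions $X_1 = (X\mid X<0)$ and $X_2 = (X\mid X\geq 0)$, applies the one-sided result (Theorem~\ref{thm:sym}/Corollary~\ref{porSym}) to each piece, and then runs a local (derivative-in-$p$) comparison governed by the normalizing constant $A_p = \sqrt{2\pi p/e}$ and a pointwise inequality for the auxiliary function $W(t,\alpha)$ in Lemma~\ref{LemObl}. That route gives the clean $p/q$ constant without needing the extremal. If you want to salvage the variational route, you would need to prove $\|\Gamma^s\|_p \leq (p/q)\|\Gamma^s\|_q$ for \emph{every} $s\in[0,1]$ (not just $s\in\{0,1\}$), and also justify passage to non-compact support and the mean-zero equality constraint inside the localization framework — none of which your sketch addresses.
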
 

It is an interesting open problem to find the best constant $C_{p,q}$ in the above inequality in the class of mean zero log-concave random variables. In \cite{E21} Eitan showed that for an arbitrary positive even integer $n$ one has $C_{n,2} = \left(n! \sum_{k=0}^n (-1)^k /k!  \right)^{1/n}$. The inequality is saturated for $X=\Gamma$, where $\Gamma$ has density $e^{-(x+1)}\1_{[-1,\infty)}$. In fact the author showed that for any $p \geq q \geq 1$ and for any mean zero log-concave random variable $X$ one has
\[
	\frac{\|X\|_p}{\|X\|_q} \leq \max_{0 \leq s \leq 1} \frac{\|\Gamma^s\|_p}{\|\Gamma^s\|_q},
\]
where $\Gamma^s = s \Gamma - (1-s) \Gamma'$ and $\Gamma'$ is an independent copy of $\Gamma$. The author conjectured that $s \in \{0,1\}$ always achieves the above maximum and verified this conjecture for $q<p<100$. 

Let us also mention that in fact for symmetric log-concave random variables a more general version of Theorem \ref{thm:sym}. is known. Namely, there is a  description of densities maximizing and minimizing $\|X\|_{p_{n+1}}$ under  fixed moments $\|X\|_{p_k}$, $k=1,\ldots, n$ for arbitrary $n$ and arbitrary $p_1,\ldots, p_{n+1}>-1$, see \cite{ENT18}.   

We now focus on arbitrary not necessarily symmetric or centered log-concave random variables. We show that in this case the best constant $C_{p,q}$ is achieved by a certain shifted exponential distribution $X_0 = E+a$, where $a$ is a real number.     

\begin{theorem}\label{thm:shifted-exp}
If $\mc{P}$ denotes the class of all log-concave random variables, then for $p > q > 0$ we have
\[
\sup_{X \in \mc{P}} \frac{\|X\|_p}{\|X\|_q} = \frac{\|X_0\|_p}{\|X_0\|_q},
\] 
where $X_0$ is a shifted exponential distribution.
\end{theorem}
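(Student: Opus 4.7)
The plan is to normalize the problem by homogeneity and then reduce, via a localization argument, to a finite-dimensional family of candidate extremizers, among which the shifted exponential is explicitly optimal.

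Since both $\|\cdot\|_p$ and $\|\cdot\|_q$ are positively $1$-homogeneous under the dilations $X \mapsto cX$, $c>0$, the ratio $\|X\|_p/\|X\|_q$ is scale-invariant, so I would normalize the density $f$ of $X$ so that $\int f = 1$ and $\int |x|^q f\,dx = 1$. The question becomes maximizing the linear functional $f \mapsto \int |x|^p f\,dx$ subject to two linear constraints on the convex set
\[
\mc{F} = \Bigl\{ f:\er \to [0,\infty) : \log f \text{ concave}, \ \int f = 1, \ \int |x|^q f\,dx = 1 \Bigr\}.
\]

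Next I would invoke a localization-type reduction showing that extremal densities may be taken piecewise log-affine with at most two affine pieces of $V = -\log f$ on the support. Concretely, the candidate extremizers reduce to uniform densities on bounded intervals, one-sided (shifted) exponential densities $\alpha e^{-\alpha(x-a)}\1_{[a,\infty)}$ and their reflections, and ``tent-type'' densities with two exponential pieces glued at a peak. The idea is standard: if $f$ is not of this form, then $V$ is strictly convex on some subinterval, and one can perturb $V$ there by a small correction along a direction that preserves both linear constraints and strictly increases $\int |x|^p f\,dx$, contradicting extremality.

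Within this finite-parameter family I would show that only the one-sided shifted exponentials can attain the supremum. For tent-type densities, parametrize by the two slopes and the position of the peak, and verify by a direct perturbation (driving one of the slopes to $\infty$ while enforcing the two moment constraints) that pushing mass strictly to one side strictly raises $\int |x|^p f$; uniform densities are likewise eliminated as a degenerate sub-optimal limit. Finally, inside the two-parameter family $X_0 = E/\alpha + a$ the dilation $\alpha$ cancels in the ratio, so the problem reduces to maximizing $\|E+a\|_p/\|E+a\|_q$ over $a \in \er$. This is a smooth function of $a$ with $\lim_{|a|\to\infty}\|E+a\|_p/\|E+a\|_q = 1$, so the supremum is attained at some $a_0 = a_0(p,q) \in \er$, and the corresponding $X_0 = E + a_0$ realizes the global supremum over $\mc{P}$.

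The main obstacle is the localization step: the set $\mc{F}$ is infinite-dimensional and its extreme points are not obvious, so one must carefully justify that the supremum over $\mc{F}$ is already captured by the piecewise log-affine subfamily above. A secondary subtlety is compactness: a priori the supremum could be approached by densities whose scale blows up or whose support drifts off to $\pm \infty$, and one has to rule this out by checking that in all such degenerations the ratio $\|X\|_p/\|X\|_q$ tends to $1$ and is thus not extremal, so that the optimum is genuinely attained by a bona fide shifted exponential.
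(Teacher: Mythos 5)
Your outline is structurally aligned with the paper's proof: both normalize by homogeneity, reduce via a localization principle to a finite-parameter family of piecewise log-affine densities, analyze that family, and show the supremum is realized in the degenerate limit by a shifted exponential. The paper even explicitly handles your ``secondary subtlety'' about compactness through a degeneration analysis on the truncated-exponential parameters, and your observation that $\|E+a\|_p/\|E+a\|_q \to 1$ as $|a| \to \infty$ is a clean way to see attainment.

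The genuine gap is the localization step, which you acknowledge as ``the main obstacle'' and then dispose of with a one-sentence perturbation heuristic. This step is precisely the hard part, and the paper does not prove it from scratch: it invokes a substantial theorem of Fradelizi and Gu\'edon (Adv.\ Math.\ 204 (2006)), which states that when a continuous convex functional is maximized over log-concave probability measures on a segment subject to a finite number of linear inequality constraints, the extremizer has a potential $V=-\log f$ equal to the maximum of $k$ affine functions, $k$ being the number of active constraints. With only the single moment constraint $\int(|x|^q-1)\,d\mu \geq 0$ active, $k=1$ and $V$ is affine, so the extremizer is a truncated exponential $\mathbbm{1}_{[-a,b]}(x)e^{\alpha x+\beta}$ --- a \emph{single} log-affine piece, not ``at most two.'' By over-counting to two pieces you admit ``tent-type'' densities that the theorem already rules out, and your proposed elimination of tents (``driving one of the slopes to $\infty$'') is another perturbation sketch that would need careful justification. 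The claim that one can ``perturb $V$ along a direction that preserves both linear constraints and strictly increases $\int|x|^p f$'' whenever $V$ is strictly convex somewhere is exactly the assertion the Fradelizi--Gu\'edon theorem makes rigorous, and proving it directly requires controlling log-concavity under perturbations, ensuring a maximizer exists to perturb, and handling the case where the perturbation direction is tangent to the constraint manifold; none of this is routine.

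A second, more minor divergence: once in the finite-dimensional truncated-exponential family, the paper does not argue by driving slopes to infinity but sets up the Lagrange conditions explicitly for the four parameters $(a,b,\alpha,\beta)$ with two equality constraints, and proves (via a sign analysis of $r(x)=|x|^p-\lambda_2|x|^q-\lambda_1$ against $e^{\alpha x}$) that the \emph{only} interior critical points are symmetric uniform distributions, which are sub-optimal by the symmetric case of Theorem~\ref{thm:sym}. The maximizing sequence must therefore escape to the boundary of parameter space, and after bounding one endpoint uniformly (the $M_q$ observation) the only non-degenerate limit is $\pm E + \text{const}$. This is more delicate than it looks: one must rule out the Dirac limit $\delta_{a_0}$ and the case where both endpoints blow up, which the paper does by passing to subsequences. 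Your sketch jumps over this case analysis. So: same overall route, but the two load-bearing steps --- the localization reduction and the classification of critical points in the reduced family --- are each supplied by you only as plausibility arguments rather than proofs.
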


\noindent The proof of this theorem uses an important concept of \emph{degrees of freedom} of a log-concave function, developed by Fradelizi and Gu\'edon in \cite{FraGue}. For applications of this method in the context of  entropy see \cite{MNT21, BN21}.   

Let $W$ be the Lambert function (inverse of $xe^x$) which is well-defined as a function $W:\er_+ \to \er_+$. Using Theorem \ref{thm:shifted-exp}. we show the following result.

\begin{theorem}\label{thm:bestConst}
For any log-concave real random variable X and any $p > q  \geq 2$ the inequality $\|X\|_p \leq C_0 \frac{p}{q} \|X\|_q$ holds true, where $C_0=e^{W(1/e)} \approx 1.3211$. Moreover, this is
the least such constant independent of $p,q$. 
\end{theorem}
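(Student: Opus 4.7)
My plan is to derive Theorem~\ref{thm:bestConst} from Theorem~\ref{thm:shifted-exp}, which reduces the supremum of $\|X\|_p/\|X\|_q$ over log-concave $X$ to the family $X=E+a$. For $a\geq 0$ the variable $E+a$ is positive log-concave, so Theorem~\ref{thm:sym} combined with Corollary~\ref{gamma} already yields $\|X\|_p/\|X\|_q\leq p/q$, better than required. The substance is the case $a=-s<0$: writing
\[
\|E-s\|_p^p = e^{-s}\bigl(\Gamma(p+1)+I_p(s)\bigr),\qquad I_p(s):=\int_0^s u^p e^u\,du,
\]
differentiation gives the ODE $\partial_s\|E-s\|_p^p = s^p-\|E-s\|_p^p$, so the minimizer $s^*_p$ in $s$ is characterized by $\|E-s^*_p\|_p=s^*_p$.

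The pivotal lemma is $\|E-s\|_p\geq p\,W(1/e)$ for every $p\geq 1$, $s\geq 0$. I would prove it by showing $s<s^*_p$ whenever $s\leq pW(1/e)$: on that range the defining identity $W(1/e)\,e^{W(1/e)} = 1/e$ gives $s^p e^s\leq (p/e)^p$, and Stirling's lower bound $\Gamma(p+1)\geq (p/e)^p$ then yields $\|E-s\|_p^p\geq e^{-s}\Gamma(p+1)\geq s^p$, placing $s$ strictly before the minimum via the ODE. With the lemma in hand, $\|E-s\|_q/q\geq W(1/e)=1/(eC_0)$, and the target $\|E-s\|_p/p\leq C_0\,\|E-s\|_q/q$ reduces to a matched upper bound on $\|E-s\|_p$. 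I would split on whether $s\leq qW(1/e)$ or $s>qW(1/e)$. In the first range the Stirling-based bound $\|E-s\|_q\geq e^{-s/q}(q/e)$ is still sharp, and the estimate $I_p(s)\leq (p/e)^p$ (which follows from $s\leq pW(1/e)$) gives $\|E-s\|_p\leq e^{-s/p}(p/e)(1+o(1))$; the residual exponential factor is $e^{s(1/q-1/p)}\leq e^{W(1/e)} = C_0$, which is exactly the advertised constant. In the second range the term $I_q(s)$ becomes large enough in $\|E-s\|_q^q$ to upgrade the lower bound to $\|E-s\|_q\gtrsim s$, while $\|E-s\|_p\lesssim s + e^{-s/p}\Gamma(p+1)^{1/p}$ keeps the ratio bounded by $p/q$ itself. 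I expect the main obstacle to be handling the transitional regime $s\approx qW(1/e)$ for large $q$ and $p\gg q$, where the Stirling corrections must be matched precisely on both sides so that the absolute constant does not inflate beyond $C_0$.

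For the optimality assertion I would exhibit $q_n\to\infty$, $s_n=q_n W(1/e)$, and $p_n/q_n\to\infty$. The key lemma is asymptotically tight at $s_n$, hence $\|E-s_n\|_{q_n}/q_n\to W(1/e)$ by a Laplace-type computation that makes both terms $\Gamma(q_n+1)$ and $I_{q_n}(s_n)$ contribute at the same order. Since $s_n/p_n\to 0$, the tail term dominates and $\|E-s_n\|_{p_n}\sim p_n/e$, i.e.\ $\|E-s_n\|_{p_n}/p_n\to 1/e$. Combining, $(q_n/p_n)\cdot\|E-s_n\|_{p_n}/\|E-s_n\|_{q_n}\to 1/(eW(1/e)) = e^{W(1/e)} = C_0$, confirming that no smaller constant suffices.
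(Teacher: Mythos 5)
Your broad strategy matches the paper's: apply Theorem~\ref{thm:shifted-exp} to reduce to $X=E-s$, dispose of $s\leq 0$ via Theorem~\ref{thm:sym} and Corollary~\ref{gamma}, introduce the ODE $\partial_s m_p(s)=s^p-m_p(s)$ for $m_p(s)=\|E-s\|_p^p$, and extract the key lower bound $\|E-s\|_q\geq qW(1/e)=q/(eC_0)$ from the unique minimizer $s_q^\ast$. That lower bound and its derivation via Stirling's inequality $\Gamma(q+1)\geq (q/e)^q$ are essentially the paper's $t_q\geq u_q\geq qW(1/e)$ chain, and the optimality argument with $s_n=q_nW(1/e)$, $q_n\to\infty$, $p_n/q_n\to\infty$ is Lemma~\ref{Szacowanie z dolsu} in all but notation. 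So the proposal is on the right track.

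However, there is a genuine gap in the upper-bound half, precisely where the sharp constant must emerge. In your first range you write $\|E-s\|_p\leq e^{-s/p}(p/e)(1+o(1))$ and conclude that the ``residual exponential factor'' $e^{s(1/q-1/p)}\leq C_0$ finishes the job. But the theorem is a non-asymptotic statement with the exact constant $C_0$, and the $(1+o(1))$ factor is not negligible for finite $p,q$: it is $\big(g(p)+I_p(s)/(p/e)^p\big)^{1/p}$ with $g(p)=\sqrt{2\pi p}\,e^{\mu(p)}>1$, which is bounded away from $1$ for moderate $p$. Simultaneously, your lower bound $\|E-s\|_q\geq e^{-s/q}(q/e)$ \emph{discards} the corresponding factor $g(q)^{1/q}>1$, and with it discarded the two bounds give a ratio roughly $C_0\cdot(g(p)+1)^{1/p}$, which exceeds $C_0 p/q$. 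A concrete instance: with $q=2$, $p=2.1$, $s=2W(1/e)\approx 0.557$, your stated upper bound gives $\|E-s\|_{2.1}\lesssim 1.25$ while the stated lower bound gives $\|E-s\|_2\geq 0.56$, yielding a ratio $\approx 2.24$, far above $C_0\cdot p/q\approx 1.39$. To repair this one must keep $g(q)^{1/q}$ on the denominator and then prove an inequality of the shape $g(p)+I_p(s)/(p/e)^p\leq C_0^q\,g(q)^{p/q}$ (or a variant), which is exactly the ``Observation'' the paper reduces to and then attacks through the case split $p/q\geq 2$ (Lemma~\ref{duzy iloraz}, where the comparison $g(q)^{2p/3q}\geq g(p)$ requires $p/q>3/2$) versus $p/q<2$ (Lemmas~\ref{szacowanie max}--\ref{szacowanie 2. moment}). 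Your sketch neither identifies this case split nor supplies the needed comparison of Stirling corrections; the obstacle you flag ($s\approx qW(1/e)$ with $q$ large and $p\gg q$) is in fact the easy regime — the real difficulty, which the paper's Lemmas~\ref{szacowanie max} through~\ref{szacowanie 2. moment} exist to handle, is $p/q$ close to $1$ with $q$ moderate, where the Stirling factors on the two sides nearly cancel and crude bounds lose the constant. The same imprecision ($\gtrsim$, $\lesssim$) afflicts your second range $s>qW(1/e)$, which corresponds to the paper's Lemma~\ref{szacowanie 2. moment} but needs the quantitative bound via $\|E-s\|_2=\sqrt{1+(s-1)^2}$ to close.
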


The article is organized as follows. In Section \ref{section:Stirling} we analyze an exact Stirling's formula for $\Gamma$ function. In Section \ref{section:zeroMean}. we prove the Theorem \ref{thm:zeroMean}. In Section \ref{section:generalCase} we first prove the Theorem \ref{thm:shifted-exp} and then use it to prove the Theorem \ref{thm:bestConst}.
\vspace{1 cm}

\noindent\textbf{Acknowledgments}

The author would like to thank prof. Rafa{\l} Lata{\l}a for his mentorship and for providing many results shown in this article.
The author would also like to thank dr Piotr Nayar for numerous helpful discussions and suggesting the degrees of freedom approach as a good starting point for the case of general logarithmically concave random variable.

\section{Stirling's formula}\label{section:Stirling}

Throughout the paper we will often make use of the following form of Stirling's formula (Theorem 1.6.3 in \cite{AAR}).
\begin{theorem}[Stirling's formula for $\Gamma$ function]
We have
$$\Gamma(x+1) = \Big(\frac xe\Big)^x \sqrt{2 \pi x} e^{\mu(x)},$$
where $\mu(x)$ is given by the formula $\mu(x) = \int_0^{\infty} \arctan\big(\frac tx\big)(e^{2\pi t} - 1)^{-1} dt$. In particular, $\mu(x)$ is a decreasing function of $x$ and $\mu(x) \geq 0$.
\end{theorem}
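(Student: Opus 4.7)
The plan is to reduce the statement to Binet's second formula for $\log\Gamma$. Set
\[
F(x) = \log\Gamma(x+1) - \left(x+\tfrac12\right)\log x + x - \tfrac12\log(2\pi),
\]
so that proving the identity in the theorem is equivalent to showing $F(x)$ agrees with the integral expression for $\mu(x)$ (with whatever normalizing constant the convention of~\cite{AAR} attaches to $\mu$). I would try to do this by matching derivatives on $(0,\infty)$ and then matching limits at $+\infty$ to fix the constant of integration.

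The crucial input is the classical integral representation of the digamma function
\[
\psi(x) = \log x - \frac{1}{2x} - 2\int_0^\infty \frac{t\,dt}{(t^2+x^2)(e^{2\pi t}-1)}.
\]
To derive it, I would apply the Abel-Plana summation formula to the function $s \mapsto 1/(s+x)$, starting from $\psi(x) = -\gamma + \sum_{n=0}^\infty\bigl(\tfrac{1}{n+1} - \tfrac{1}{n+x}\bigr)$; alternatively, one can obtain it from Hermite's integral representation of the Hurwitz zeta function by differentiating at $s=0$. Either route converts a discrete sum into a continuous integral via a contour-shift argument, which is the analytic heart of the whole formula.

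Granting this, the rest is bookkeeping. A direct computation using $\psi(x+1)=\psi(x)+1/x$ gives
\[
F'(x) = \psi(x+1) - \log x - \frac{1}{2x} = \psi(x) - \log x + \frac{1}{2x} = -2\int_0^\infty \frac{t\,dt}{(t^2+x^2)(e^{2\pi t}-1)},
\]
while differentiating under the integral in the definition of $\mu$ and using $\partial_x \arctan(t/x) = -t/(t^2+x^2)$ produces the matching expression (up to the factor absorbed into the paper's normalization of $\mu$). Hence $F - \mu$ is a constant, and I would show it is zero by letting $x\to\infty$: the soft Stirling estimate $\log\Gamma(x+1) = (x+\tfrac12)\log x - x + \tfrac12\log(2\pi) + o(1)$ is elementary (e.g., via Euler-Maclaurin with one correction term or by comparing $\log n!$ with $\int \log$), while $\mu(x)\to 0$ because $\arctan(t/x)\le t/x$ and $\int_0^\infty t(e^{2\pi t}-1)^{-1}\,dt$ is finite.

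For the ``in particular'' claims, no further work is needed: for each $t>0$ the integrand $\arctan(t/x)/(e^{2\pi t}-1)$ is positive in $x$ and strictly decreasing in $x$, so monotone convergence gives both $\mu(x)\ge 0$ and the monotonicity of $\mu$. The main obstacle is clearly the digamma integral representation; once that analytic identity is in hand everything else is derivative-matching and a single asymptotic, but deriving Binet's formula rigorously requires either the Abel-Plana machinery or the Hermite representation of $\zeta(s,x)$, neither of which is transparent by elementary means.
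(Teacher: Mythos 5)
The paper gives no proof of this result at all; it simply cites it as Theorem 1.6.3 of [AAR] and uses it as a black box throughout. So there is no paper argument to compare against, and your sketch must be judged on its own terms, where it is a correct and standard route to Binet's second form of Stirling's formula: define $F(x) = \log\Gamma(x+1) - (x+\tfrac12)\log x + x - \tfrac12\log(2\pi)$, show $F' = \mu'$ by combining the digamma integral representation with differentiation of $\arctan(t/x)$ under the integral sign, and fix the constant by sending $x\to\infty$ (soft Stirling for $F$, dominated convergence plus $\arctan(t/x)\le t/x$ for $\mu$). The derivative identity $F'(x)=\psi(x)-\log x+\tfrac1{2x}$ is correct, and you rightly locate the only real analytic content in the digamma integral, for which Abel--Plana or Hermite's representation of $\zeta(s,x)$ are the two standard sources. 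For the ``in particular'' clauses you actually need less than monotone convergence: the integrand $\arctan(t/x)/(e^{2\pi t}-1)$ is pointwise nonnegative and pointwise decreasing in $x$ for each $t>0$, so $\mu\ge 0$ and $\mu$ decreasing are immediate. One thing worth flagging explicitly: as printed, the paper's formula for $\mu$ is missing the factor of $2$ that the conventional Binet identity carries ($\mu(x)=2\int_0^\infty \arctan(t/x)(e^{2\pi t}-1)^{-1}\,dt$), which your derivative-matching step would reveal; you noticed this and parenthetically absorbed it into ``the paper's normalization,'' but it is more accurately a typo in the paper. It is harmless for everything downstream, since the paper only uses that $\mu$ is nonnegative and decreasing, not its exact value.
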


From this formula, the corollary below follows easily.
\begin{corollary}\label{gamma}
The function $f(p) = \frac{\Gamma(p+1)^{1/p}}{p}$ is decreasing on $[1, \infty]$
\end{corollary}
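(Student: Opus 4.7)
The plan is to take logarithms of $f(p)$, apply the just-stated Stirling formula, and verify that each summand of $\log f(p)$ is decreasing on $[1,\infty)$.

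Starting from $\log f(p) = \frac{1}{p}\log\Gamma(p+1) - \log p$ and substituting $\Gamma(p+1) = (p/e)^p\sqrt{2\pi p}\,e^{\mu(p)}$, the expression collapses to
\[
\log f(p) = -1 + \frac{\log(2\pi p)}{2p} + \frac{\mu(p)}{p}.
\]
It therefore suffices to prove separately that $g(p) := \frac{\log(2\pi p)}{2p}$ and $h(p) := \frac{\mu(p)}{p}$ are decreasing on $[1,\infty)$.

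For $g$, a direct quotient-rule computation gives $g'(p) = \frac{1 - \log(2\pi p)}{2p^2}$. Since $2\pi > e$, we have $\log(2\pi p) \geq \log(2\pi) > 1$ for all $p \geq 1$, so $g'(p) < 0$ on $[1,\infty)$. For $h$, the derivative is $h'(p) = \frac{p\mu'(p) - \mu(p)}{p^2}$; the theorem guarantees $\mu'(p) \leq 0$ (as $\mu$ is decreasing) and $\mu(p) \geq 0$, which makes the numerator nonpositive. Summing the two contributions yields $(\log f)'(p) < 0$ on $[1,\infty)$, completing the proof.

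There is no genuine obstacle here: once Stirling is invoked in its logarithmic form, the monotonicity properties of $\mu$ supplied by the previous theorem line up exactly with what is needed, and the only numerical input required is the elementary inequality $\log(2\pi) > 1$.
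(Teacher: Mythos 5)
Your proof is correct and follows essentially the same route as the paper: take logarithms, substitute the Stirling factorization, and handle the $\frac{\log(2\pi p)}{2p}$ and $\frac{\mu(p)}{p}$ terms separately via their derivatives. The only difference is cosmetic: the paper simply asserts that $e^{\mu(p)/p}$ is decreasing, whereas you spell out the quotient-rule argument using $\mu' \leq 0$ and $\mu \geq 0$, which is the same fact made explicit.
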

\begin{proof}
We have
$$\frac{\Gamma(p+1)^{1/p}}{p} = \frac{\frac pe \sqrt[2p]{ 2 \pi p}e^{\mu(p)/p}}{p} = \frac 1e \Big(2\pi p\Big)^{1/2p} e^{\mu(p)/p}.$$
The function $e^{\mu(p)/p}$ is decreasing. For $p \geq 1$ we also have
$$\frac d{dp} \ln\Big(\big(2\pi p\big)^{1/2p}\Big) = \frac d{dp} \frac1{2p} \ln(2\pi p) = \frac{1-\ln(2\pi p)}{2p^2} \leq \frac{1-\ln(2\pi )}{2p^2} < 0.$$
Therefore the function $\Big(2\pi p\Big)^{1/2p}$ is also decreasing, from which the statement follows.
\end{proof}

This corollary lets us observe that $C_{p, q} \leq \frac pq$ for symmetric or nonnegative log-concave random variables.
\begin{corollary}\label{porSym}
If $X$ is a log-concave random variable which is symmetric or nonnegative, then for $p > q \geq 1$
$$\|X\|_p \leq \frac pq \|X\|_q.$$
\end{corollary}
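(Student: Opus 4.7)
The plan is to combine the two ingredients already assembled in this section: the Barlow--Marshall--Proschan bound of Theorem \ref{thm:sym} and the monotonicity result of Corollary \ref{gamma}. Theorem \ref{thm:sym} applied to a symmetric or nonnegative log-concave $X$ gives, for $p \geq q > 0$,
\[
\|X\|_p \leq \frac{\|E\|_p}{\|E\|_q}\|X\|_q = \frac{\Gamma(p+1)^{1/p}}{\Gamma(q+1)^{1/q}}\|X\|_q,
\]
so the entire task collapses to proving the numerical inequality
\[
\frac{\Gamma(p+1)^{1/p}}{\Gamma(q+1)^{1/q}} \leq \frac{p}{q}, \qquad p \geq q \geq 1.
\]

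This inequality is equivalent, after dividing both sides by $p$ (and multiplying by $1/q$ on the right), to $f(p) \leq f(q)$, where $f(p)=\Gamma(p+1)^{1/p}/p$. But this is precisely the content of Corollary \ref{gamma}, which asserts that $f$ is decreasing on $[1,\infty)$. Hence the proof is a one-line chain: invoke Theorem \ref{thm:sym} to pull the constant into the gamma-function form, then invoke Corollary \ref{gamma} to upper bound it by $p/q$.

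There is no real obstacle here; all the nontrivial work was already done in the proof of Corollary \ref{gamma}, where the explicit Stirling representation $\Gamma(p+1) = (p/e)^p\sqrt{2\pi p}\,e^{\mu(p)}$ was used to split $f(p)/e$ into the decreasing factors $(2\pi p)^{1/(2p)}$ (decreasing for $p \geq 1$ since $\ln(2\pi p)>1$) and $e^{\mu(p)/p}$ (decreasing because $\mu$ is). The only thing to watch is the hypothesis $q \geq 1$, which is needed precisely because the monotonicity of $(2\pi p)^{1/(2p)}$ fails below the threshold $p=1/(2\pi) \cdot e$; this is why Corollary \ref{porSym} is stated for $q \geq 1$ rather than all $q>0$.
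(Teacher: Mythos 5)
Your proof is correct and matches the paper's own argument exactly: invoke Theorem \ref{thm:sym} to get the constant $\Gamma(p+1)^{1/p}/\Gamma(q+1)^{1/q}$, then use Corollary \ref{gamma} to bound it by $p/q$. The paper states this in one line; your additional remark on where the threshold $q\geq 1$ (in fact $q\geq e/(2\pi)$) comes from is accurate but not needed.
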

\begin{proof}
Apply Corollary \ref{gamma}. to Theorem \ref{thm:sym}.
\end{proof}
The Corollary \ref{gamma}. and Corollary \ref{porSym}. have been first proven by R. Lata{\l}a and J. O. Wojtaszczyk \cite{LW}.

\section{Log-concave random variables with mean 0.}\label{section:zeroMean}

In this section we will prove the theorem \ref{thm:zeroMean}.
If $X = 0$ almost surely, then the inequality is trivial. By classification of log-concave measures \cite{Borell} we can focus on variables with continuous distributions. \\
\bigskip

\noindent\textbf{Notation. }
We will denote $\Pr(X < 0)$ by $r$. Without loss of generality, we might assume that $r \leq \frac 12$ (otherwise we can analyse $-X$).

\bigskip

\noindent The following theorem gives a bound for $r$ from below:
\begin{theorem}[Gr\"unbaum's Inequality]\label{Grunbaum}
If $f: \mathbb{R} \to [0, \infty)$ is an integrable, log-concave function such that $\int_{-\infty}^{\infty} xf(x) dx = 0$, then
$$\int_{-\infty}^{0} f(x) dx \geq e^{-1}\int_{-\infty}^{\infty} f(x) dx.$$
\end{theorem}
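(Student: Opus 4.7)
The plan is to prove $r := \int_{-\infty}^0 f \,\big/\, \int_\er f \geq 1/e$. After normalizing $\int f = 1$ and setting $a := f(0)$, I would first reduce by symmetry to the case $r \leq 1/2$ (otherwise apply the argument to $f(-\cdot)$), so that the goal becomes showing $r \geq 1/e$ under this assumption.

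The workhorse is the following lemma: if $h : [0, \infty) \to [0, \infty)$ is log-concave with $h(0) = a > 0$, then $h(0) \int_0^\infty x h(x) \, dx \leq \bigl(\int_0^\infty h\bigr)^2$, with equality iff $h$ is exponential. After rescaling $h$ to a probability density, this reduces to the statement that the mean of $h$ is at most $1/h(0)$. For this I would use that the tail $H(t) = \int_t^\infty h$ is log-concave (via Pr\'ekopa--Leindler), with $H(0) = 1$ and $(\log H)'(0^+) = -h(0)$; concavity of $\log H$ then forces $\log H(t) \leq -h(0)\, t$, so $H(t) \leq e^{-h(0)\, t}$, and integrating in $t$ yields the bound on the mean.

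Applying the lemma to $f|_{[0,\infty)}$ and to $f(-\cdot)|_{[0,\infty)}$ gives $\int_0^\infty x f(x) \, dx \leq (1-r)^2/a$ and $-\int_{-\infty}^0 x f(x) \, dx \leq r^2/a$. The mean-zero condition $\int_0^\infty xf = -\int_{-\infty}^0 xf$, combined with $r \leq 1-r$, forces $m := \int_0^\infty xf \leq r^2/a$. To convert this into $r \geq 1/e$, I would then invoke a Fradelizi--Gu\'edon-type degrees-of-freedom argument: the infimum of $r$ on the convex cone of log-concave densities subject to the linear constraints $\int f = 1$ and $\int xf = 0$ is attained by an $f$ whose log-density is piecewise affine with at most two pieces. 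A case analysis singles out the one-sided shifted exponential $f(x) = \lambda e^{\lambda x - 1} \1_{(-\infty,\, 1/\lambda]}(x)$, for which a direct computation gives $\int_{-\infty}^0 f = e^{-1}$ independently of the scale parameter $\lambda$.

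The main obstacle is carrying out the reduction to the extremal exponential cleanly without circularly invoking the Fradelizi--Gu\'edon machinery developed only later in Section \ref{section:generalCase}. An elementary alternative would require complementing the upper bound $m \leq r^2/a$ with a matching lower bound on $m$, which in turn demands exploiting the global log-concavity of $f$ across $x = 0$ (not merely on each half-line separately); the lemma above only produces upper bounds on first moments in terms of $f(0)$, so additional input from unimodality or a direct comparison with a two-sided exponential matching $f(0)$, $r$ and $1-r$ at the origin would be needed to close the loop.
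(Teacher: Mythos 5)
The paper does not prove this statement; it simply cites Gr\"unbaum's 1960 paper \cite{Grunbaum}. So there is no internal proof to compare with — you have taken on the harder task of actually supplying one, and it should be assessed on its own merits.

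Your key lemma and its proof are correct: for log-concave $h$ on $[0,\infty)$ the tail $H(t)=\int_t^\infty h$ is log-concave with $H(0)=\int_0^\infty h$ and $H'(0^+)=-h(0)$, the tangent bound gives $H(t)\le H(0)\,e^{-h(0)t/H(0)}$, and integrating yields $h(0)\int_0^\infty xh\le\bigl(\int_0^\infty h\bigr)^2$. Applying this on both half-lines (with $\int f=1$, $a=f(0)$, $r=\int_{-\infty}^0 f$, $m=\int_0^\infty xf=-\int_{-\infty}^0 xf$) correctly produces $m\le r^2/a$ and $m\le (1-r)^2/a$.

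The gap you flag, however, is genuine and not merely a loose end. Both of your inequalities are \emph{upper} bounds on $m$; on their own they say nothing about $r$, since $a$ is a free parameter. And there is no matching lower bound on $m$ in terms of $a$ and $1-r$ coming from one-sided log-concavity alone: taking $h(x)=a\,e^{cx}\1_{[0,x_0]}$ with $c$ large and $x_0$ chosen so $\int_0^{x_0} h=1-r$, one has $h(0)=a$, $h$ log-concave on $[0,\infty)$, and $\int_0^{x_0} xh\to 0$ as $c\to\infty$. What rules out such $h$ as the right half of a globally log-concave $f$ is the requirement $(\log f)'(0^-)\ge(\log f)'(0^+)=c$, which forces $f(x)\le a e^{cx}$ for $x<0$ and hence $r\le a/c\to 0$ — but this derivative-matching at $0$ is exactly the ``global log-concavity across $x=0$'' that your lemma never touches, since it only consults $f(0)$. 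So the loop cannot be closed from the information you have extracted, and you are right to say additional input is needed.

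The fallback you sketch — a Fradelizi--Gu\'edon degrees-of-freedom reduction to log-affine densities on an interval — would indeed work, and is not logically circular in the paper's structure (the machinery of Section 4 is used to prove Theorem 1.3, while Gr\"unbaum's inequality is cited, not rederived). But as written you only assert the reduction and identify the one-sided shifted exponential $f(x)=\lambda e^{\lambda x-1}\1_{(-\infty,1/\lambda]}$ as the extremizer; you do not carry out the case analysis among densities $e^{\alpha x+\beta}\1_{[c,d]}$ under the two constraints $\int f=1$, $\int xf=0$, which is where the value $e^{-1}$ would actually have to emerge. As it stands the proposal contains a correct and nicely proved lemma and a correct identification of the obstruction, but it does not constitute a proof.
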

Proof of this theorem can be found in \cite{Grunbaum}.

\begin{corollary}
If $X$ is a log-concave random variable with mean 0, then
$r = \Pr(X < 0) \geq e^{-1}$.
\end{corollary}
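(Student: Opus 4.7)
The proof is essentially a direct application of Grünbaum's inequality, so the plan is short. First I would invoke the reduction already made in the section: if $X$ is not constant almost surely (the constant case being excluded since one needs $\mathbb{E} X = 0$ and the inequality is nontrivial), then by the classification of log-concave measures $X$ admits a log-concave density $f : \mathbb{R} \to [0, \infty)$. In particular $f$ is integrable with $\int_{-\infty}^{\infty} f(x)\, dx = 1$.

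Next, I would translate the hypothesis $\mathbb{E} X = 0$ into the integral condition $\int_{-\infty}^{\infty} x f(x)\, dx = 0$ required by Theorem \ref{Grunbaum}, noting this is well defined because a log-concave density has all moments. Having verified the hypotheses of Grünbaum's inequality, I would apply it to $f$ to obtain
\[
\Pr(X < 0) = \int_{-\infty}^{0} f(x)\, dx \geq e^{-1} \int_{-\infty}^{\infty} f(x)\, dx = e^{-1},
\]
which is exactly the claim.

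There is no real obstacle here: Grünbaum's inequality does all the work, and the only point to be careful about is the continuity/density reduction, which was already handled at the start of Section \ref{section:zeroMean}. The only subtlety worth mentioning is that $\Pr(X < 0)$ rather than $\Pr(X \le 0)$ appears in the statement; since $X$ has a density, the two coincide, so no distinction matters.
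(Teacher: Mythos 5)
Your proposal is correct and follows exactly the same route as the paper: reduce to the density case, note $\int x f(x)\,dx = \mathbb{E}X = 0$, and apply Gr\"unbaum's inequality to conclude $\Pr(X<0) \geq e^{-1}$. The extra remark about $\Pr(X<0)$ versus $\Pr(X\le 0)$ is harmless and accurate but not needed.
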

\begin{proof}
Let $g$ be the density of $X$. Then $g$ is a nonnegative log-concave function and we have
$$\int_{-\infty}^\infty xg(x) dx = \Ex X = 0.$$
By Gr\"unbaum's inequality we have
$$r = \Pr(X < 0) = \int_{-\infty}^{0} g(x) dx \geq e^{-1} \int_{-\infty}^{\infty} g(x) dx = e^{-1}.$$
\end{proof}

First we will show Theorem \ref{thm:zeroMean}. for sufficiently large $q$.

\begin{lemma}
If $X$ is a zero-mean log-concave real random variable, then Theorem \ref{thm:zeroMean}. is true for $p > q \geq \frac{e^3}{2\pi}$.
\end{lemma}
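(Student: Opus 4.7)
The plan is to split $\Ex|X|^p$ according to the sign of $X$, apply Theorem \ref{thm:sym} on each side, and recombine using Gr\"unbaum's inequality and a Stirling computation. Set $r = \Pr(X<0)$; by replacing $X$ with $-X$ if necessary, assume $r \leq 1/2$. Let $Y^- = -X$ conditioned on $\{X<0\}$ and $Y^+ = X$ conditioned on $\{X>0\}$; both are positive log-concave random variables, since the restriction of a log-concave density to a half-line remains log-concave. Writing $a_s = \Ex (Y^-)^s$ and $b_s = \Ex (Y^+)^s$, one has $\Ex |X|^s = r a_s + (1-r) b_s$. Applying Theorem \ref{thm:sym} to $Y^\pm$ yields
\[
a_p \leq \kappa^p a_q^{p/q}, \qquad b_p \leq \kappa^p b_q^{p/q}, \qquad \kappa := \frac{\Gamma(p+1)^{1/p}}{\Gamma(q+1)^{1/q}},
\]
so $\Ex|X|^p \leq \kappa^p\bigl[r a_q^{p/q} + (1-r) b_q^{p/q}\bigr]$.

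Next I would bound the bracket. Normalising so that $r a_q + (1-r) b_q = 1$, the map $(a,b) \mapsto r a^{p/q} + (1-r) b^{p/q}$ is convex on the segment $\{(a,b):\ a,b \geq 0,\ ra+(1-r)b = 1\}$, hence attains its maximum at one of the endpoints $(1/r, 0)$ or $(0, 1/(1-r))$, giving $r^{1-p/q}$ or $(1-r)^{1-p/q}$ respectively. Since $r \leq 1-r$ and $1-p/q \leq 0$, the former dominates. This produces $\Ex|X|^p \leq \kappa^p r^{1-p/q} (\Ex|X|^q)^{p/q}$, so the desired inequality reduces to
\[
\kappa^p r^{1-p/q} \leq (p/q)^p.
\]

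At this point Gr\"unbaum's inequality, together with the mean-zero hypothesis, gives $r \geq 1/e$, so $r^{1-p/q} \leq e^{p/q-1}$ and it suffices to show $(\kappa q/p)^p \leq e^{1-p/q}$. Taking logarithms, inserting the Stirling expansion $\log\Gamma(x+1) = x(\log x - 1) + \tfrac12 \log(2\pi x) + \mu(x)$ from Section \ref{section:Stirling}, and using that $\mu(x)/x$ is decreasing (as is implicit in the proof of Corollary \ref{gamma}) so that $p\mu(q)/q - \mu(p) \geq 0$, the problem collapses, with $s := p/q \geq 1$, to
\[
(s-1)\bigl[\log(2\pi q) - 2\bigr] \geq \log s.
\]
Since $\sup_{s>1} \log s/(s-1) = 1$, this holds whenever $\log(2\pi q) - 2 \geq 1$, that is, $q \geq e^3/(2\pi)$, as claimed.

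The main obstacle I anticipate is the bookkeeping in the final Stirling step: the precise threshold $e^3/(2\pi)$ has to emerge from balancing the logarithmic correction in $\kappa$ against the one-unit of slack provided by Gr\"unbaum's bound $r \geq 1/e$, which is the sole point where the mean-zero hypothesis is actually used. The convexity extremum is a quick two-line check once the decomposition is fixed, and the $\mu$-contribution happens to point the right way thanks to the monotonicity noted in the proof of Corollary \ref{gamma}.
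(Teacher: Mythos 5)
Your proof is correct and follows the same strategy as the paper: split $|X|^s$ by sign, apply Theorem \ref{thm:sym} to each conditional piece, recombine with $r \geq e^{-1}$ from Gr\"unbaum, and finish by a Stirling estimate requiring $\ln(2\pi q)\geq 3$. The only differences are cosmetic: you bound $r a_q^{p/q}+(1-r)b_q^{p/q}$ by evaluating a convex function at the endpoints of a segment, where the paper uses the elementary inequality $(1+\alpha t)^{q/p}\leq 1+\alpha t^{q/p}$, and you compare $f(p)$ with $f(q)$ directly via $\log s \leq (s-1)$ instead of proving $f(x)=e^{-1/x}\Gamma(x+1)^{1/x}/x$ is monotone on $[e^3/2\pi,\infty)$.
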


\begin{proof}

Let $X_1, X_2$ be random variables with the same distributions as $X$ conditioned on $\{X<0\}$, $\{X\geq 0\}$ respectively (so they have densities $\frac{1}{r}g_X(x)I_{\{x<0\}}$,
$\frac{1}{1-r}g_X(x)I_{\{x>0\}})$). 
Then
$\Ex X^s=r\Ex X_1^s+(1-r)\Ex X_2^s$ for all $s>0$. 

Fix $p>q$ and to shorten the notation, let
$$a_i:=\|X_i\|_p,\ b_i:=\|X_i\|_q,\ c_s:=\Gamma(s+1)^{1/s}.$$
We want to show that
\[
\|X\|_p = (ra_1^p+(1-r)a_2^p)^{1/p}\leq \frac{p}{q}(rb_1^q+(1-r)b_2^q)^{1/q}= \frac pq \|X\|_q.
\]
We know that
\[
\frac{1}{e}\leq r\leq \frac{1}{2},\quad a_i\leq \frac{c_p}{c_q}b_i.
\]
Using this, we have
    $$\|X\|_p=(ra_1^p + (1-r)a_2^p)^{1/p} \leq \frac{c_p}{c_q}(rb_1^p + (1-r)b_2^p)^{1/p}.$$
    Observe that for $\alpha \geq 1$, $t \geq 0$ the following inequality is true
    $$(1 + \alpha t)^{q/p} \leq 1 + (\alpha t)^{q/p} \leq 1 + \alpha t^{q/p}.$$
    By applying this inequality to $\alpha = \frac{1-r}r \geq 1$ (because $r \leq \frac 12$) and $t = \big(\frac {b_2}{b_1}\big)^p$ and raising both sides to the power $1/q$ we get
    $$\Big(1+\frac{1-r}r \Big(\frac {b_2}{b_1}\Big)^p\Big)^{1/p} \leq \Big(1 + \frac{1-r}r\Big(\frac {b_2}{b_1}\Big)^q\Big)^{1/q}.$$
    After multiplying by $r^{1/p}b_1$ we have
    $$(rb_1^p + (1-r)b_2^p)^{1/p} \leq r^{1/p-1/q}(rb_1^q + (1-r)b_2^q)^{1/q} \leq e^{1/q-1/p}(rb_1^q + (1-r)b_2^q)^{1/q},$$
    where in the last inequality we used the inequality $r \geq e^{-1}$.
    It remains to check, that $\frac {c_p}{c_q} e^{1/q-1/p} \leq \frac pq$.
    This is equivalent to showing that for $x > \frac{e^3}{2\pi}$ the function
$$f(x):=e^{-1/x} \Gamma(x+1)^{1/x} \frac 1x = \Big(\frac 1e \Gamma(x+1) x^{-x}\Big)^{1/x} $$
is decreasing.

By Theorem \ref{gamma}.,  $\Gamma(x+1) = \sqrt{2\pi}x^{x+1/2}e^{-x} e^{\mu(x)}$, where $\mu$ is decreasing. Therefore
$$f(x) = \Big(\frac 1e \sqrt{2\pi x} e^{-x+\mu(x)}\Big)^{1/x}, \quad
\ln f(x) = \frac{\mu(x)}x + \frac{\ln(2\pi e^{-2}x)}{2x} - 1.$$
We note that $\frac{\mu(x)}x$ is decreasing, so it suffices to check that $\frac{\ln(2\pi e^{-2}x)}{2x}$ is a decreasing function. Its derivative is
$$\frac d{dx} \frac{\ln(2\pi e^{-2}x)}{2x} = - \frac{\ln(2\pi x) - 3}{2x^2}.$$
 If $x \geq \frac {e^3}{2\pi}$, then $\ln(2 \pi x) \geq 3$, which finishes the proof.

\end{proof}

\vspace{0.5 cm}
$ $\\
\textbf{Proof of Theorem \ref{thm:zeroMean}. in the full range of parameters}
\vspace{0.5cm}\\
In the preceding reasoning, the key was to show that the function $(e(rb_1^p + (1-r)b_2^p))^{1/p}$ is decreasing with respect to $p$. Now we will find for each $p$ such $A_p$, that if for all $q$ in $(p-\ve, p)$ the function $(A_p(rb_1^q + (1-r)b_2^q))^{1/q}$ is decreasing, then the Theorem \ref{thm:zeroMean}. follows. 

\begin{lemma}\label{Ap}
If, using the previous notation, for all $p > 1$ there exists $\ve >0$ such that for $1 < p-\ve <q \leq p$ we have
$$(A_p(rb_1^p + (1-r)b_2^p))^{1/p} \leq (A_p(rb_1^q + (1-r)b_2^q))^{1/q},$$
where $A_p = \sqrt{\frac {2\pi p}e},$
then the Theorem \ref{thm:zeroMean}. follows.
\end{lemma}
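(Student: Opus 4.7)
The plan is to pass from the local hypothesis to Theorem \ref{thm:zeroMean} in three stages: combining the hypothesis with Barlow--Marshall--Proschan to obtain a local inequality $\|X\|_p\leq C(p,q)\|X\|_q$, verifying via Stirling that $C(p,q)\leq p/q$ for $q$ just below $p$, and globalizing by a connectedness argument on $[1,p]$.

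First, since the conditional distributions of $-X$ given $\{X<0\}$ and of $X$ given $\{X\geq 0\}$ are positive log-concave, Theorem \ref{thm:sym} applied to $X_1$ and $X_2$ yields $a_i\leq (c_p/c_q)b_i$, so
$$\|X\|_p=\bigl(ra_1^p+(1-r)a_2^p\bigr)^{1/p}\leq \frac{c_p}{c_q}\bigl(rb_1^p+(1-r)b_2^p\bigr)^{1/p}.$$
Dividing the lemma's hypothesis by $A_p^{1/p}$ bounds the right-hand factor by $A_p^{1/q-1/p}\|X\|_q$, so for every $p>1$ there exists $\varepsilon>0$ such that
$$\|X\|_p \leq \frac{c_p}{c_q}\, A_p^{\,1/q-1/p}\,\|X\|_q \qquad\text{for every } q\in (p-\varepsilon,p].$$

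The heart of the argument is then to show that the constant $C(p,q):=(c_p/c_q)A_p^{1/q-1/p}$ is at most $p/q$ for $q$ sufficiently close to $p$. Set $R(q):=\log C(p,q)-\log(p/q)$, so that $R(p)=0$; a direct differentiation using $\log c_p=\log\Gamma(p+1)/p$ together with the exact Stirling formula from Section \ref{section:Stirling} gives, after cancellation of all leading terms,
$$R'(p)=\frac{\mu(p)-p\mu'(p)}{p^2}>0,$$
since $\mu\geq 0$ and $\mu$ is strictly decreasing on $(0,\infty)$. The choice $A_p=\sqrt{2\pi p/e}$ is dictated precisely by the requirement that the leading Stirling terms cancel and leave this strictly positive residual; this identity is the main technical obstacle, and it is why Section \ref{section:Stirling} was set up with the explicit control of $\mu$ and its derivative. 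Consequently $R(q)<0$ on some left-neighborhood of $p$, which combined with the first step yields $\|X\|_p\leq (p/q)\|X\|_q$ throughout that neighborhood.

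Finally, to globalize, fix $p\geq q\geq 1$ and let $S:=\{t\in[q,p]:\|X\|_p\leq (p/t)\|X\|_t\}$. Then $p\in S$ and $S$ is closed by continuity of $s\mapsto\|X\|_s$, so $t_*:=\inf S\in S$. If $t_*>q$, the local statement applied at $t_*$ (which exceeds $1$) produces some $u\in (q,t_*)$ with $\|X\|_{t_*}\leq (t_*/u)\|X\|_u$; chaining with $\|X\|_p\leq (p/t_*)\|X\|_{t_*}$ gives $\|X\|_p\leq (p/u)\|X\|_u$, so $u\in S$, contradicting the minimality of $t_*$. Hence $t_*=q$ and $\|X\|_p\leq (p/q)\|X\|_q$, which is Theorem \ref{thm:zeroMean}.
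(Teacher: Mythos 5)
Your proof is correct and follows essentially the same route as the paper: you combine the lemma's hypothesis with the Barlow--Marshall--Proschan bound $a_i \leq (c_p/c_q)b_i$, verify via Stirling's formula that the resulting constant $(c_p/c_q)A_p^{1/q-1/p}$ drops below $p/q$ for $q$ just below $p$, and then globalize. Your direct computation of $R'(p)=\frac{\mu(p)-p\mu'(p)}{p^2}>0$ plays exactly the role of the paper's observation that $f_p(x)=A_p^{-1/x}c_x/x$ satisfies $f_p(p)\leq f_p(q)$ for $q$ near $p$, and your connectedness argument is a standard substitute for the paper's derivative-sign globalization of $p\mapsto\|X\|_p/p$.
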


\begin{proof}
The function $p \mapsto \frac 1p\|X\|_p$ is differentiable. Therefore, if for all $p > 1$ there exists $\varepsilon > 0$ such that Theorem \ref{thm:zeroMean} is true for $p \geq q > p - \varepsilon > 1$, then the inequality is true for all $p >q \geq 1$. That is because this means that the derivative of $p \mapsto \frac 1p\|X\|_p$ is nonpositive on $(0, \infty)$. This lets us examine the inequality locally.\\
Define
$$f_p(x) := A_p^{-\frac 1x} \frac 1x \Gamma(x+1)^{1/x} = A_p^{-1/x}\frac{c_x}x.$$
Then
$$\ln f_p(x) = \frac{\ln(2 \pi A_p^{-2} x)}{2x} + \frac{\mu(x)}{x} - 1.$$
The function $\frac{\mu(x)}{x} - 1$ has a negative and continuous derivative. Moreover,
$$\frac d{dx} \frac{\ln(2 \pi A_p^{-2} x)}{2x}=-\frac 1{2x^2} (\ln(\frac{2 \pi x}{e}) - \ln(A_p^2)),$$
which is less or equal to zero if and only if $A_p \leq \sqrt{\frac{2 \pi x}e}$. Thus, if $A_p = \sqrt{\frac{2\pi p}e}$, then the function $f_p(x)$ is decreasing on $(0, p]$.

If for $p \geq q > p-\ve > 1$ we have
$$(A_p(rb_1^p + (1-r)b_2^p))^{1/p} \leq (A_p(rb_1^q + (1-r)b_2^q))^{1/q},$$
then
$$A_p^{1/p}\|X\|_p \leq \frac{c_p}{c_q}(A_p(rb_1^p + (1-r)b_2^p))^{1/p} \leq \frac{c_p}{c_q}(A_p(rb_1^q + (1-r)b_2^q))^{1/q} = \frac{c_p}{c_q}A_p^{1/q}\|X\|_q,$$
so
$$\frac{A_p^{1/p}}{c_p}\|X\|_p \leq \frac{A_p^{1/q}}{c_q}\|X\|_q.$$
We have already proven that 
$$f_p(p) =A_p^{-1/p}\frac{c_p}p \leq A_p^{-1/q}\frac{c_q}q = f_p(q).$$
Multiplying the last two inequalities we get
$$\frac1p \|X\|_p \leq \frac1q \|X\|_q,$$
which is the statement of Theorem \ref{thm:zeroMean}.
\end{proof}
In light of the previous lemma, it suffices to prove for $0 \leq q \leq p$ that
$$(A_p(rb_1^p + (1-r)b_2^p))^{1/p} \leq (A_p(rb_1^q + (1-r)b_2^q))^{1/q}.$$
Denote $y := \frac{b_2}{b_1}$. The inequality transforms to

$$(A_p(r + (1-r)y^p))^{1/p} \leq (A_p(r + (1-r)y^q))^{1/q}.$$
Thus, we can investigate the monotonicity of $x \mapsto (A_p(r + (1-r)y^x))^{1/x}$ on $(0, p]$. We have
\begin{align*}
 &\frac d{dx} (A_p(r + (1-r)y^x))^{1/x} \\ 
 &=\frac{(A_p(r+(1-r)y^x))^\frac1x}{x^2(r+(1-r)y^x)} \cdot \Big(
(1-r)y^x \ln (y^x) - (r+(1-r)y^x)(\ln A_p + \ln(r+(1-r)y^x))\Big).
\end{align*}

The first factor is always positive, the second one, after transformations and substitution $t:=y^x$ is nonpositive if and only if
$$\frac{t^\frac{(1-r)t}{r+(1-r)t}}{r+(1-r)t} \leq A_p.$$
Therefore, it suffices to show that
$$\frac{t^\frac{(1-r)t}{r+(1-r)t}}{r+(1-r)t} \leq \sqrt{\frac{2 \pi p}{e}}.$$ 
Let us further denote $\alpha := \frac r{1-r}$, then $\alpha \in [\frac 1{e-1}, 1]$. We get

$$\frac{t^\frac{(1-r)t}{r+(1-r)t}}{r+(1-r)t} = 
\frac{t^\frac{t}{t+\alpha}}{t + \alpha} \cdot (\alpha+1) =: W(t, \alpha).$$
We have
$$\frac d{dt} \frac{t^{\frac{t}{t+\alpha}}}{t + \alpha} = 
\frac{\alpha t^{\frac t{t+\alpha}}}{(t+\alpha)^3} \cdot \ln t.$$
The sign of the derivative is the same as the sign of $\ln t$, so  for fixed $\alpha$ the function $t \mapsto W(t, \alpha)$ is decreasing on $(0, 1]$ and increasing on $[1, \infty)$. Recall that $t = y^x$, where $x$ is in the interval $(0, p]$. It follows that if for a fixed $y$ the inequality is true for $x=p$, then it is true for all values of $x$ in that interval. Observe, that by definition of $r$ and the fact that $\Ex X = 0$ we have

$$\alpha \|X_1\|_1 = \|X_2\|_1.$$
By log-concavity and the inequality between norms ($\|X\|_q \leq \|X\|_p$ for $1 \leq q \leq p)$ we get

$$\|X_i\|_1 \leq \|X_i\|_q \leq \|X_i\|_p \leq \Gamma(p+1)^\frac1p \|X_i\|_1 \leq p \|X_i \|_1.$$
Recall, that $t = y^x = \big(\frac{\|X_2\|_q}{\|X_1\|_q}\big)^x$. It is enough to prove the inequality at $x=p$, so we can estimate

$$t = \Big(\frac{\|X_2\|_q}{\|X_1\|_q}\Big)^p \leq \Big(\frac{p\alpha \|X_1\|_1}{\|X_1\|_1}\Big)^p = \alpha^pp^p$$
and
$$t= \Big(\frac{\|X_2\|_q}{\|X_1\|_q}\Big)^p \geq \Big(\frac{\alpha\|X_1\|_1}{\Gamma(p+1)^\frac1p \|X_1\|_1}\Big)^p = \frac{\alpha^p}{\Gamma(p+1)}.$$
Therefore, it remains to prove  the following technical lemma.
\begin{lemma}\label{LemObl}
Suppose that $p \geq 1$, $\alpha \in [\frac{1}{e-1}, 1]$ and $t = \frac{\alpha^p}{\Gamma(p+1)}$ or $t = \alpha^pp^p$ and $\alpha^pp^p \geq 1$.Then
$$W(t, \alpha) = \frac{t^\frac{t}{t+\alpha}}{t+\alpha} \cdot (\alpha + 1) \leq \sqrt{\frac{2\pi p}{e}}.$$
\end{lemma}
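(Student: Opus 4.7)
The plan is to handle the two formulas for $t$ separately, in each case reducing the two-variable inequality to a one-variable one in $p$ by a uniform estimate in $\alpha$.

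In the case $t = \alpha^p p^p$ (with $t \geq 1$), I would set $\delta := \alpha/t \in (0,\alpha]$. Then $t+\alpha = t(1+\delta)$ and $t/(t+\alpha) = 1/(1+\delta)$, which rearranges to
\[
\ln W(t,\alpha) = -\frac{\delta \ln t}{1+\delta} - \ln(1+\delta) + \ln(\alpha+1).
\]
Since $\ln t \geq 0$ here, the first term is nonpositive, so $W(t,\alpha) \leq (\alpha+1)/(1+\delta)$. Using $\alpha \leq 1$ together with $\delta = \alpha^{1-p}p^{-p} \geq p^{-p}$ (valid because $\alpha \leq 1$ and $p \geq 1$ force $\alpha^{1-p} \geq 1$), this gives $W(t,\alpha) \leq 2p^p/(p^p+1)$. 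The claim then reduces to the one-variable inequality $p(1+p^{-p})^2 \geq 2e/\pi$ on $[1,\infty)$, which follows from expanding the square and noting $p + 2p^{1-p} \geq 3$ uniformly in $p \geq 1$.

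In the case $t = \alpha^p/\Gamma(p+1)$, since $\Gamma(p+1) \geq \Gamma(2) = 1$ for $p \geq 1$ and $\alpha \leq 1$, we have $t \leq 1$ and hence $t^{t/(t+\alpha)} \leq 1$. Therefore $W(t,\alpha) \leq (\alpha+1)/(t+\alpha)$, reducing the lemma to
\[
\alpha + \frac{\alpha^p}{\Gamma(p+1)} \geq (\alpha+1)\sqrt{\frac{e}{2\pi p}}.
\]
For $p \geq e^3/(2\pi)$ we have $\sqrt{e/(2\pi p)} \leq 1/e$, and the bound reduces to $\alpha \geq (\alpha+1)/e$, which is precisely $\alpha \geq 1/(e-1)$. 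For $1 \leq p < e^3/(2\pi)$, the partial derivative with respect to $\alpha$ of the left side minus the right side equals $1 + p\alpha^{p-1}/\Gamma(p+1) - \sqrt{e/(2\pi p)}$, which is positive on the relevant range, so the difference is minimized at $\alpha = 1/(e-1)$. The problem then reduces to the one-variable inequality
\[
\sqrt{\frac{e^3}{2\pi p}} \leq 1 + \frac{(e-1)^{1-p}}{\Gamma(p+1)} \qquad \text{for } p \in [1, e^3/(2\pi)].
\]

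The main obstacle is verifying this last inequality: both sides equal $1$ in the limit $p \to e^3/(2\pi)^-$ (where $\sqrt{e^3/(2\pi p)} = 1$ exactly), and numerical checks show the margin becomes very small near $p \approx 2.5$. To close the argument, I would apply the exact Stirling's formula from Section~\ref{section:Stirling} to expand $\Gamma(p+1)$, reducing the inequality to a sign analysis of an explicit function of $p$ involving $e^{\mu(p)}$; the monotonicity of $\mu$ together with convexity estimates on the resulting expression should permit verification, possibly after splitting $[1, e^3/(2\pi)]$ into subintervals handled separately.
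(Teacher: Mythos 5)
Your Case 1 (where $t=\alpha^p p^p\geq 1$) is essentially the same bound as the paper's: after rewriting, your estimate $W\leq(\alpha+1)/(1+\delta)$ is exactly the paper's $W\leq t(\alpha+1)/(t+\alpha)$. Your final step is slightly off, though: the claim that $p+2p^{1-p}\geq 3$ uniformly on $[1,\infty)$ is false (it dips to roughly $2.99$ near $p\approx 2.4$). This is easily repaired since you only need $p(1+p^{-p})^2\geq 2e/\pi\approx 1.73$, which follows trivially from $p\geq 2$ for $p\geq 2$ and from $(1+p^{-p})^2\geq (1+2^{-2})^2>1.73$ for $p\in[1,2]$, but as written the justification is wrong.

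The substantive gap is in Case 2, and it stems from a choice you make early on. You bound $t^{t/(t+\alpha)}$ by $1$ (valid since $t\leq 1$), whereas the paper uses the AM--GM estimate $t^{t/(t+\alpha)}\leq\frac{t^2+\alpha}{t+\alpha}$, which after the substitution $c=\alpha^{p-1}/\Gamma(p+1)$ and Fact~\ref{faktpom2} becomes $\leq(1+c)^{-1/2}$. With your cruder bound you must prove $(1+c)^2\geq e^3/(2\pi p)$; the paper needs only the weaker $(1+c)^3\geq e^3/(2\pi p)$, and that extra factor of $(1+c)$ is the whole ballgame. Numerically, at $p\approx 2.5$ and $\alpha=1/(e-1)$ your inequality has a margin of roughly $0.003$ (with $(1+c)^2\approx 1.285$ against $e^3/(2\pi p)\approx 1.279$), while the paper's version has a margin of about $0.18$. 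That comfortable margin is what lets the paper finish with the crude estimate $(1+c)^{-3}\leq(1+3c)^{-1}$ followed by linear-plus-convexity bounds reducing to two explicit quadratic inequalities. Your version cannot afford any such slack. You acknowledge this --- ``the main obstacle,'' ``the margin becomes very small'' --- and propose Stirling plus subinterval splitting, but you do not carry it out, and given a margin of order $10^{-3}$ against $e^{\mu(p)}$ error terms, it is genuinely unclear that this plan closes. (Also, your remark that ``both sides equal $1$'' at $p=e^3/(2\pi)$ is incorrect: the left side is $1+(e-1)^{1-p}/\Gamma(p+1)\approx 1.04>1$ there; the near-equality is an interior phenomenon near $p\approx 2.5$, not a boundary one.) So the proposal as written does not constitute a complete proof: the key one-variable inequality to which you reduce Case 2 is stated, not proven, and the reduction itself threw away the margin that makes the paper's finishing argument go through.
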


In order to prove this lemma, we will use two auxiliary facts.
\begin{fact}\label{faktpom1}
For $p \geq 2$ we have
$$\frac{1}{e-1} \in \argmax_{\alpha \in [\frac1{e-1}, 1]} \frac{1+(\frac{\alpha^{p-1}}{\Gamma(p+1)})^2}{1+\frac{\alpha^{p-1}}{\Gamma(p+1)}}.$$
\end{fact}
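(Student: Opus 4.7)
The plan is to reduce the two-variable problem to a one-dimensional comparison of two endpoint values and then verify that comparison by monotonicity in $p$. Set $u(\alpha):=\alpha^{p-1}/\Gamma(p+1)$ and $f(u):=(1+u^2)/(1+u)$. A direct computation gives $f'(u)=(u^2+2u-1)/(1+u)^2$, so $f$ has a unique critical point $u^{*}=\sqrt{2}-1$ in $(0,\infty)$, is strictly decreasing on $(0,u^{*})$ and strictly increasing on $(u^{*},\infty)$. Since $p-1\geq 1$, the map $\alpha\mapsto u(\alpha)$ is strictly increasing on $[1/(e-1),1]$, and hence $\alpha\mapsto f(u(\alpha))$ is unimodal on that interval (either monotone or first decreasing then increasing). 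Its maximum must therefore be attained at one of the two endpoints.

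It thus suffices to show $f(u(1/(e-1)))\geq f(u(1))$. Writing $\beta:=(e-1)^{-(p-1)}$ and $\gamma:=1/\Gamma(p+1)$, this reads
$$\frac{1+\beta^2\gamma^2}{1+\beta\gamma}\;\geq\;\frac{1+\gamma^2}{1+\gamma}.$$
Clearing denominators and factoring, I find that this inequality is equivalent to
$$(1-\beta)\,\gamma\,\bigl[\,1-(1+\beta)\gamma-\beta\gamma^2\,\bigr]\;\geq\;0.$$
Because $(e-1)^{p-1}\geq 1$ for $p\geq 2$ one has $\beta\leq 1$, so the first two factors are nonnegative. The whole claim therefore reduces to the single scalar inequality $(1+\beta)\gamma+\beta\gamma^2\leq 1$ for all $p\geq 2$.

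For this last step, I observe that on $[2,\infty)$ both $\beta(p)$ and $\gamma(p)$ are positive and strictly decreasing (the latter because $\Gamma$ is strictly increasing on $[2,\infty)$). Since $(1+\beta)\gamma+\beta\gamma^2$ is a polynomial in $\beta,\gamma$ with nonnegative coefficients, it is itself decreasing in $p$, so its maximum over $p\geq 2$ is attained at $p=2$. At $p=2$ one has $\beta=1/(e-1)$ and $\gamma=1/2$, and a direct computation gives
$$(1+\beta)\gamma+\beta\gamma^2 \;=\; \tfrac{1}{2}+\frac{3}{4(e-1)},$$
which is strictly less than $1$ since $e>5/2$. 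There is no single hard step here: the only conceptual move is to recognize that the unimodality of $f$ combined with the monotonicity of $u(\alpha)$ collapses the argmax question to comparing two endpoint values, after which the inequality becomes a routine algebraic identity and a one-point numerical check at $p=2$.
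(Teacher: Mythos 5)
Your proof is correct. The unimodality reduction to an endpoint comparison is the same as the paper's, but your verification of that comparison is genuinely different and cleaner. The paper shows that the smaller endpoint image $u_1=\frac{1}{(e-1)^{p-1}\Gamma(p+1)}$ always lies below $\sqrt2-1$, then splits into two cases according to whether $u_2=\frac{1}{\Gamma(p+1)}$ lies above or below $\sqrt2-1$: in the first case both endpoints are on the decreasing branch of $h$ so the conclusion is immediate, and in the second case the paper uses the fact that $h(u_1(p))$ is increasing in $p$ while $h(u_2(p))$ is decreasing in $p$ to reduce to a single numerical check at $p=2$. You instead clear denominators and factor the endpoint inequality as $(1-\beta)\gamma\bigl[1-(1+\beta)\gamma-\beta\gamma^2\bigr]\geq 0$, so the whole problem collapses to the scalar inequality $(1+\beta)\gamma+\beta\gamma^2\leq 1$, which is manifestly monotone in $p$ because it has nonnegative coefficients in the decreasing quantities $\beta,\gamma$; the single check at $p=2$ gives $\tfrac12+\tfrac{3}{4(e-1)}<1$. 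Both routes terminate in the same $p=2$ check, but your factorization dispenses with the branch analysis of $h$ entirely, which makes the argument shorter and more transparent. One tiny caveat: monotonicity of $\gamma$ uses that $\Gamma$ is increasing on $[3,\infty)$ (since the argument is $p+1\geq 3$), which is what you need and holds; the stated ``on $[2,\infty)$'' is also true but not quite the relevant interval.
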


\begin{proof}
Let us calculate the derivative of $h(x)=\frac{1+x^2}{1+x}$. We get
$$h'(x) = \frac{x^2+2x-1}{(x+1)^2}=\frac{(x+1+\sqrt{2})(x+1-\sqrt2)}{(x+1)^2}.$$
Therefore, $h$ is decreasing on the interval $(0, \sqrt2-1)$ and increasing on $(\sqrt2-1, \infty)$. We are looking for $\alpha$ maximizing $h(\frac{\alpha^{p-1}}{\Gamma(p+1)})$ and the function $\frac{\alpha^{p-1}}{\Gamma(p+1)}$ is increasing with respect to $\alpha$, so the maximum is attained at one of the ends of the interval $[\frac1{e-1}, 1]$. It is therefore sufficient to show that $h(\frac1{\Gamma(p+1)}) \leq h\Big(\frac{1}{(e-1)^{p-1}\Gamma(p+1)}\Big)$. 
We have
$$\frac{1}{(e-1)^{p-1}\Gamma(p+1)} < \frac1{\Gamma(p+1)}$$
and
$$\frac{1}{(e-1)^{p-1}\Gamma(p+1)} \leq \frac1{2(e-1)} < \sqrt2-1.$$
Thus, it remains to prove the inequality when $\frac1{\Gamma(p+1)} > \sqrt2-1$. Observe that both $\frac{1}{(e-1)^{p-1}\Gamma(p+1)}$ and $\frac1{\Gamma(p+1)}$ are decreasing with respect to $p$. Therefore, if $\frac1{\Gamma(p+1)} > \sqrt2-1$, then the function $h(\frac{1}{(e-1)^{p-1}\Gamma(p+1)})$ increases with $p$ and $h(\frac1{\Gamma(p+1)})$ decreases with $p$. Thus, it suffices to check the inequality for $p=2$. Indeed, $h(\frac1{2(e-1)}) > 0.84$, while $h(\frac12) = \frac 56 < 0.84$.
\end{proof}

\begin{fact}\label{faktpom2}
For $0 < c \leq \frac 13$ we have
$1+c^2 \leq \sqrt{1+c}$.
\end{fact}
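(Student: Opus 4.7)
The plan is to reduce the inequality to a simple polynomial comparison by squaring both sides. Since both $1+c^2$ and $\sqrt{1+c}$ are positive for $c\in(0,\tfrac13]$, the inequality $1+c^2\le\sqrt{1+c}$ is equivalent to $(1+c^2)^2\le 1+c$. Expanding yields $1+2c^2+c^4\le 1+c$, and after subtracting $1$ and dividing through by the positive quantity $c$ this becomes the cubic inequality
\[
2c+c^3\le 1.
\]

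Next I would observe that the function $g(c):=2c+c^3$ is strictly increasing on $(0,\infty)$, so on the interval $(0,\tfrac13]$ it is maximized at the right endpoint $c=\tfrac13$. A direct evaluation gives
\[
g(\tfrac13)=\tfrac{2}{3}+\tfrac{1}{27}=\tfrac{18}{27}+\tfrac{1}{27}=\tfrac{19}{27}<1,
\]
which closes the argument.

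There is essentially no obstacle here: the fact is elementary and the squaring step is legitimate because both sides are positive in the stated range. The only thing to be careful about is to check the numerical bound $19/27<1$ and to note monotonicity of $g$, rather than attempting a calculus-based optimization that would be wasteful for such a small computation.
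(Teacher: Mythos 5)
Your proof is correct and follows the same route as the paper: square both sides, reduce to $2c+c^3\le 1$, and check it on $(0,\tfrac13]$. You merely spell out the monotonicity of $2c+c^3$ and the value $19/27<1$, which the paper leaves as "clearly the case."
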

\begin{proof}

We want to show that
$1+c^2 \leq \sqrt{1+c}.$
After squaring both sides we get
$1+2c^2+c^4 \leq 1+c.$
For $c > 0$ this is equivalent to
$2c + c^3 \leq 1,$
which is clearly the case for $0 < c \leq \frac13$.
\end{proof}

\begin{proof}[Proof of lemma \ref{LemObl}]
$ $\\
\textbf{Case 1. $t =p^p\alpha^p \geq 1$.}

We have $t \geq 1$, so
 $t^\frac{t}{\alpha + t} \leq t$, therefore

$$\frac{t^\frac{t}{\alpha + t}}{t+\alpha} \leq \frac t{t+\alpha} \leq 1.$$
From this, $W(t, \alpha) \leq \alpha + 1 \leq 2$. This proves the inequality for $p \geq \frac{2e}{\pi}$ (in this case $\sqrt{\frac {2\pi p}e} \geq 2$), it remains to show it for $p \leq \frac{2e}{\pi} < \sqrt 3$. In this case

$$t \leq \sqrt{3} ^{\sqrt3} \alpha ^ {\sqrt3} 
\leq 3\alpha^{\sqrt3} 
\leq 3.$$
The inequality $t^\frac{t}{\alpha + t} \leq t$ implies

$$W(t, \alpha) \leq \frac{t(\alpha+1)}{t+\alpha}.$$
The function $\frac t{t+\alpha}$ is increasing with respect to $t$, $t \leq 3$, so

$$\frac{t(\alpha+1)}{t+\alpha} \leq \frac{3(\alpha+1)}{\alpha+3}.$$
The function $\frac{\alpha+1}{\alpha+3}$ is increasing with respect to $\alpha$, $\alpha \leq 1$, so $\frac{3(\alpha+1)}{\alpha+3} \leq \frac32$. Thus

$$W(t, \alpha)  \leq \frac 32 \leq \sqrt{\frac {2\pi}{e}} \leq \sqrt{\frac{2\pi p}e},$$
which completes the proof of the first case.
\vspace{0.5 cm}\\
\textbf{Case 2. $t =\frac{\alpha^p}{\Gamma(p+1)}\leq 1$, $p \geq 2$.}\\
We have
$$W(t, \alpha) =
t^\frac{t}{t+\alpha} \cdot \frac\alpha{t+\alpha} \cdot \frac{\alpha+1}\alpha.$$
The factor $\frac{\alpha+1}\alpha$ is decreasing with respect to $\alpha$, so

$$\frac{\alpha+1}\alpha \leq \Big(\frac1{e-1}+1\Big)(e-1) =  e.$$
We also note that

$$\frac\alpha{t+\alpha} = \frac1{1+\frac{\alpha^{p-1}}{\Gamma(p+1)}}.$$
This expression decreases with $\alpha$.

By AM-GM inequality
$$t^\frac{t}{t+\alpha} = t^\frac{t}{t+\alpha} \cdot 1^{\frac{\alpha}{t + \alpha}}
\leq \frac{t^2}{t+\alpha} + \frac{\alpha}{t+\alpha} = \frac{t^2 + \alpha}{t+\alpha} =
\frac{1+\frac{\alpha^{2p-1}}{\Gamma(p+1)^2}}{1+\frac{\alpha^{p-1}}{\Gamma(p+1)}} \leq
\frac{1+(\frac{\alpha^{p-1}}{\Gamma(p+1)})^2}{1+\frac{\alpha^{p-1}}{\Gamma(p+1)}} .$$
By the Fact \ref{faktpom1}. it suffices to check $\alpha = \frac1{e-1}$, because it maximizes the last expression. Similarly for $\frac{\alpha+1}{\alpha}$ i $\frac{\alpha}{t+\alpha}$.
Since $\frac{\alpha^{p-1}}{\Gamma(p+1)} \leq \frac1{2(e-1)} < \frac 13$, then by the Fact \ref{faktpom2}.
 $$1+(\frac{\alpha^{p-1}}{\Gamma(p+1)})^2 \leq \sqrt{1+\frac{\alpha^{p-1}}{\Gamma(p+1)}}.$$
 Thus, we get

$$W(t, \alpha) \leq  e\cdot \left(1+\frac{(e-1)^{1-p}}{\Gamma(p+1)}\right)^{-\frac32}.$$
Since we are proving $W(t, \alpha) \leq \sqrt{\frac{2\pi p}{e}}$, then after squaring both sides it remains to show that

$$\left(1+\frac{(e-1)^{1-p}}{\Gamma(p+1)}\right)^{-3} \leq \frac{2\pi p}{e^3}.$$
We estimate
$$\left(1+\frac{(e-1)^{1-p}}{\Gamma(p+1)}\right)^{-3} \leq
\left(1+3\frac{(e-1)^{1-p}}{\Gamma(p+1)}\right)^{-1} = 
\frac{\Gamma(p+1)}{\Gamma(p+1) + 3(e-1)^{1-p}}.$$
After rewriting we get the following inequality to show

$$\Gamma(p)\Big(\frac{e^3}{2\pi} - p\Big) \leq  3(e-1)^{1-p}.$$
We have $\frac {e^3}{2\pi} < 3.2$, so it suffices to check $2 \leq p \leq 3.2$ .
By convexity of exponential function and estimating by the tangent at $p=3$

$$(e-1)^{1-p} \geq (e-1)^{-2}(1-(p-3)\ln(e-1)).$$
We also have $e-1 \leq \sqrt3$ and $\frac12 \leq \ln(e-1) \leq \frac59$, which gives 

$$3(e-1)^{1-p} \geq 1-\frac12(p-3)$$
for $p \leq 3$ and

$$3(e-1)^{1-p} \geq 1-\frac59(p-3)$$
for $p \geq 3$.
By convexity of $\Gamma$ we get $\Gamma(p) \leq p-1$ for $p \in [2, 3]$ and $\Gamma(p) \leq 4p-10$ for $p \in [3, 4]$. Therefore it suffices to check two quadratic inequalities

$$(p-1)(\frac{16}5 - p) \leq 1 - \frac12(p-3)\textrm{, where }p \in[2, 3].$$
$$(4p-10)(\frac{16}5-p) \leq 1 - \frac59(p-3) \textrm{, where }p\in[3, 3.2].$$
Those inequalities are true.
\vspace{0.5 cm}\\
\textbf{Case 3. $t = \frac{\alpha^p}{\Gamma(p+1)} \leq 1$, $p \in [1, 2]$.}

 By convexity of $\Gamma$ for $p \in [1, 2]$ we get $\Gamma(p+1) \leq p$. Denote $t_0 := \frac{\alpha^p}p < t \leq 1$. Thus, $W(t_0, \alpha) \geq W(t, \alpha)$, so it suffices to show that $W(t_0, \alpha) \leq \sqrt{\frac{2\pi p}e}$. Recall that $W(t_0, \alpha) = t_0^{\frac{t_0}{t_0+\alpha}} \cdot \frac{\alpha+1}{t_0+\alpha}$. We have

$$t_0^\frac {t_0}{t_0+\alpha} = \Big(\frac{\alpha^p}p\Big)^\frac{\alpha^{p-1}}{\alpha^{p-1} + p}
\leq p^{-\frac{\alpha^{p-1}}{\alpha^{p-1} + p}}$$
and
$$\frac{\alpha^{p-1}}{\alpha^{p-1} + p} \geq \frac{(e-1)^{-1}}{(e-1)^{-1}+2}
= \frac 1{2e-1} \geq \frac15,$$
so $t_0^\frac {t_0}{t_0+\alpha} \leq p^{-1/5}$, $W(t_0, \alpha) \leq p^{-1/5} \frac{\alpha+1}{t_0+\alpha}$. Therefore, we want to show that (after dividing by $p^{1/2}$)

\begin{equation}\label{pom7}
    \frac{\alpha+1}{p^{7/10}(t_0+\alpha)} \leq \sqrt{\frac{2\pi}e}.
\end{equation}
For this purpose, we will investigate the expression

$$\frac{\alpha+1}{p^{7/10}(t_0+\alpha)} = \frac{\alpha+1}{\alpha} \cdot \frac1{p^{7/10}(\frac{\alpha^{p-1}}p + 1)}.$$
For $p \geq 1$ both factors are nonincreasing with respect to $\alpha$, so we can assume $\alpha = \frac1{e-1}$. After substituting $\alpha= \frac{1}{e-1}$ to the previous inequality and transformations, we get that the inequality \eqref{pom7} is equivalent to

$$p^{-3/10}(e-1)^{1-p} + p^{7/10} \geq \sqrt{\frac{e^3}{2\pi}}.$$
We have bounds

$$(e-1)^{1-p} \geq 1 - \ln(e-1)(p-1) \geq 1 - \frac59(p-1),$$
$$p^{-3/10} \geq 1 - \frac3{10}(p-1),$$
$$p^{7/10} \geq 1 + \frac7{10}(p-1) - \frac{21}{200}(p-1)^2,$$
by expanding corresponding functions into Taylor series at $1$. Introducing the variable $x = p-1$ we have to check the following inequality for $x \in [0, q]$.

$$\left(1-\frac59 x\right)\left(1-\frac3{10}x\right) + 1 + \frac7{10}x - \frac{21}{200}x^2 \geq \sqrt{\frac{e^3}{2\pi}}.$$
We have
$$\left(1-\frac59 x\right)\left(1-\frac3{10}x\right) + 1 + \frac7{10}x - \frac{21}{200}x^2 = \frac{37}{600}x^2-\frac{7}{45}x+2 \geq 2-\frac15x \geq 1.8.$$
However, $\sqrt{\frac{e^3}{2\pi}} < 1.8$, so the inequality is true.
\end{proof}

\section{Arbitrary real log-concave random variables}\label{section:generalCase}

In this section we will first prove the Theorem \ref{thm:shifted-exp}. and then use it to prove the Theorem \ref{thm:bestConst}.

\subsection{Proof of Theorem \ref{thm:shifted-exp}.}

In order to prove the Theorem \ref{thm:shifted-exp}., we will use results by M. Fradelizi and O. Gu\'edon from \cite{FraGue} (Chapter 3).

\begin{theorem} \label{fradelizi}
Let $\mathcal P$ be the set of log-concave probability distributions on the segment $[a,b]$. Let $P_f \subset \mathcal P$ be the set of such distributions $\mu$ that $\int f_i d\mu \geq 0$ for some fixed, continuous $f_1, ..., f_p$ that are linearly independent. If $\Phi$ is convex and continuous on $\mathcal P$, then $\sup \{\Phi(\mu) : \mu \in P_f\}$ is reached for a distribution $\nu$ with density $e^{-V}$ such that
\begin{itemize}
    \item for $k \geq 1$ functions among $f_1, ..., f_p$ we have $\int f_i d\nu = 0$,
    \item $V = \sup\{\phi_1, ..., \phi_k\}$ for some affine functions $\phi_1, ..., \phi_k$.
\end{itemize}

\end{theorem}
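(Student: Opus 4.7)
The plan is to prove the theorem in three stages: existence of a maximizer by weak compactness, a perturbation argument forcing structural rigidity at a maximizer, and a dimension count matching the affine pieces of $V$ to the number $k$ of active constraints. For existence, since $[a,b]$ is compact the class $\mc P$ of log-concave probability measures on $[a,b]$ is weakly compact: log-concave densities on a compact interval are tight, and the class is closed under weak convergence by standard pointwise-a.e.\ arguments for log-concave functions. The set $P_f \subset \mc P$ is cut out by the closed conditions $\int f_i\,d\mu \geq 0$ with $f_i$ continuous, hence is compact, and continuity of $\Phi$ yields a maximizer $\nu = e^{-V}\,dx$ with $V$ convex; set $I = \{i : \int f_i\,d\nu = 0\}$ and $k = |I|$.

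The heart of the argument is a perturbation. For a function $\psi$ such that $V + t\psi$ remains convex on an interval around $t=0$, let $\mu_t$ be the probability measure with density $e^{-(V+t\psi)}/Z_t$, where $Z_t$ is the normalizing constant. Inactive constraints stay inactive for small $|t|$, and each active constraint $\int f_i\,d\mu_t \geq 0$ imposes a one-sided linear condition on $\psi$ at first order, so the admissible directions form a convex cone of codimension at most $k$ inside the tangent cone of convex perturbations of $V$. The key structural lemma is that this tangent cone has dimension strictly greater than $k$ whenever $V$ is not a max of $k$ affine functions: each additional affine piece, or each interval on which $V$ is strictly convex, provides an independent bending direction, obtained either by splitting a piece with a new slope or by smoothly deforming a strictly convex arc. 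Thus, unless $V$ has the asserted form, some nonzero admissible $\psi$ survives the $k$ linear conditions. Along the curve $\mu_t$ the functional $\Phi(\mu_t)$ is convex (since $\Phi$ is convex on $\mc P$ and the linear parametrization $V_t = V + t\psi$ of the log-density gives a suitable convex parametrization after normalization), so its maximum on the admissibility interval is attained at an endpoint, where either a previously inactive constraint becomes active (enlarging the active set) or two slopes of $V + t\psi$ collide (strictly decreasing the affine-piece count of $V$). Iterating, the reduction terminates at a $\nu$ of the required form; the bound $k \geq 1$ follows because $k=0$ would allow the same procedure to drive $V$ to a max of zero affine functions, impossible for a probability density.

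The main technical obstacle is the perturbation lemma itself: one must precisely define the tangent cone of convex perturbations of $V$ on $[a,b]$, verify the stated dimension lower bound in terms of the affine-piece count, handle the normalization $Z_t$ so that $\Phi(\mu_t)$ is genuinely convex in $t$ rather than merely quasi-convex, and arrange the induction so that every reduction strictly decreases a well-founded complexity measure (the piece count together with a tiebreaker controlling strictly convex arcs) so that the procedure terminates. Once this lemma is in place, the remainder is a clean combination of compactness and a finite-dimensional linear-algebraic dimension count.
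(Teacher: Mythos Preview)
The paper does not prove this theorem; it is quoted from Fradelizi and Gu\'edon \cite{FraGue}, with the remark that it specializes their Theorem~2 to $K=[a,b]$, $G=\er$, $d=1$. So there is no ``paper's own proof'' to compare against here.

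That said, your plan has a genuine gap at its center. You perturb $V$ linearly, setting $\mu_t \propto e^{-(V+t\psi)}$, and then claim $\Phi(\mu_t)$ is convex in $t$ because $\Phi$ is convex on $\mc P$. But the hypothesis is convexity of $\Phi$ with respect to the \emph{affine} structure on measures, i.e.\ along paths $s \mapsto (1-s)\mu_0 + s\mu_1$; the exponential path $t \mapsto e^{-(V+t\psi)}/Z_t$ is not of this form, and there is no reason a convex $\Phi$ should be convex (or even quasi-convex) along it. You flag exactly this in your list of obstacles (``handle the normalization $Z_t$ so that $\Phi(\mu_t)$ is genuinely convex in $t$''), but it is not a technicality to be cleaned up later: for a general convex $\Phi$ on $\mc P$ it is simply false, and without it your reduction step --- pushing to an endpoint of the admissibility interval --- has no force.

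The actual Fradelizi--Gu\'edon argument perturbs in the \emph{density} direction rather than the log-density direction. One shows that at an extreme point $\nu=e^{-V}$ of the convex set $P_f$, any decomposition $e^{-V}=\tfrac12(g_1+g_2)$ with $g_i$ log-concave and in $P_f$ forces $g_1=g_2$; the dimension of admissible additive perturbations $e^{-V}\pm th$ that stay log-concave is controlled precisely by the number of affine pieces of $V$ (this is the ``degrees of freedom'' count), and comparing it with the number $k$ of active linear constraints yields the structure of $V$. Convexity of $\Phi$ then enters only through the trivial fact that a convex functional on a convex set attains its supremum at an extreme point. If you want to salvage your outline, switch your perturbation from $V\mapsto V+t\psi$ to $e^{-V}\mapsto e^{-V}+th$ and redo the tangent-cone dimension count in that setting.
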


\begin{remark}
We use Theorem 2 from 
\cite{FraGue}
by setting $K = [a, b]$, $G = \mathbb R$, $d = 1$. The original theorem is much more general.
\end{remark}

Fix $p > q > 0$. Let $g = e^{-V}$, where $V$ is a convex function, be the density of the random variable $X$. We are investigating the ratio of norms so without loss of generality $\|X\|_q = 1$. We know that $g$ is a probability density and we have the following condition
$$\int_{-\infty}^{\infty}|x|^q g(x) dx = 1.$$
Under this condition, we want to maximize the expression
$$\int_{-\infty}^{\infty}|x|^p g(x) dx.$$
By scaling properties, it is equivalent to maximizing under the condition
$$\int_{-\infty}^{\infty}-(|x|^q-1)g(x) dx \geq 0.$$

Assume that $X$ has a compact support, so we have a distribution on some segment. We can assume that this segment is not contained in either $[0, \infty)$ or $(-\infty, 0]$, because by Theorem \ref{thm:sym}. for variables of one sign we have
$$\frac{\|X\|_p}{\|X\|_q} \leq \frac{\Gamma(p+1)^{1/p}}{\Gamma(q+1)^{1/q}}$$
with equality for exponential distributions. Thus, we can assume that the support is  $[-a, b]$ for some $a, b > 0$. Now we are in a situation of Theorem \ref{fradelizi}., with one equation 
$$\int_{-a}^b (|x|^q-1) g(x) dx = 0$$
and we want to maximize
$$\Phi(\mu) = \int_{-a}^b |x|^p d\mu.$$
Thus, the maximum is achieved for $g = e^{-V}$ where $V$ is affine.  We have
$$g(x) = \mathbbm{1}_{[-a, b]}(x)e^{\alpha x + \beta}.$$
Now we also let $a, b$ vary and we are looking for supremum over variables with compact support. There are four parameters: $a, b, \alpha, \beta$ and two equations
$$1 = \int_{-a}^b e^{\alpha x + \beta} dx =:
F_1(a, b, \alpha, \beta),$$
$$1 = \int_{-a}^b |x|^q e^{\alpha x + \beta} dx =:
F_2(a, b, \alpha, \beta).$$
Under these conditions we want to maximize
$$\int_{-a}^b |x|^p e^{\alpha x + \beta} dx =: G(a, b, \alpha, \beta).$$
The functions $F_1, F_2$ and $G$ are smooth, so this is maximization of a $C^1$ function over differentiable manifold.

\begin{lemma}
In the situation above, the only critical points are symmetric uniform distributions.
\end{lemma}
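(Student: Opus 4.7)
The plan is to apply Lagrange multipliers to the constrained optimization of $G$ subject to $F_1=F_2=1$ in the variables $(a,b,\alpha,\beta)$, extract the resulting necessary equations, and then use a sign analysis of an auxiliary function $\psi$ to show that only symmetric uniform distributions satisfy them.

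Concretely, at a critical point there exist multipliers $\lambda_1,\lambda_2\in\mathbb{R}$ such that $\nabla G=\lambda_1\nabla F_1+\lambda_2\nabla F_2$. Differentiating with respect to $a$ and $b$ yields the boundary relations $a^p=\lambda_1+\lambda_2 a^q$ and $b^p=\lambda_1+\lambda_2 b^q$, while differentiating with respect to $\beta$ and $\alpha$ yields the integral identities $\int_{-a}^b \psi\,g\,dx=0$ and $\int_{-a}^b x\psi\,g\,dx=0$, where $\psi(x):=|x|^p-\lambda_1-\lambda_2|x|^q$. Since $\tilde\psi(y):=y^p-\lambda_1-\lambda_2 y^q$ on $(0,\infty)$ has derivative $y^{q-1}(py^{p-q}-q\lambda_2)$ with at most one positive zero, $\tilde\psi$ has at most two positive roots; the boundary relations then force $a,b\in\{y_1,y_2\}$. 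Configurations in which $\psi$ has a definite sign on $[-a,b]$ are immediately ruled out by $\int\psi\,g=0$, leaving two cases: (A) $a\neq b$, with $\{a,b\}=\{y_1,y_2\}$, so that $\psi$ has exactly one sign change on $[-a,b]$; and (B) $a=b=y_2$ (the larger root), in which case $\psi$ is even on $[-a,a]$ with two sign changes at $\pm y_1$.

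In case (A), assume without loss of generality $a<b$. Then $\psi(x)(x-a)$ is nonpositive on $[-a,b]$ and strictly negative on a set of positive measure, so $\int \psi(x)(x-a)\,g(x)\,dx<0$; yet this integral equals $\int x\psi\,g-a\int \psi\,g=0$ by the Lagrange equations, a contradiction (the case $a>b$ is handled symmetrically with the factor $x+b$). In case (B), the transform $L(\alpha):=\int_{-a}^{a}\psi(x)e^{\alpha x}\,dx$ is even in $\alpha$ by the evenness of $\psi$, so $L'(0)=0$ automatically. The variation-diminishing property of the exponential kernel $(x,\alpha)\mapsto e^{\alpha x}$ bounds the number of real zeros of $L$ counted with multiplicity by the number of sign changes of $\psi\mathbf{1}_{[-a,a]}$, namely $2$. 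The Lagrange conditions force $\alpha$ to be a double zero of $L$; if $\alpha\neq 0$, evenness would then make $-\alpha$ a double zero as well, producing at least four zeros in total and contradicting the bound. Hence $\alpha=0$, giving the symmetric uniform distribution.

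The main obstacle is case (B): the one-line factorization argument that disposes of case (A) cannot render the integrand of fixed sign once $\psi$ has two sign changes, so a more refined tool is needed. My intended resolution is the variation-diminishing property cited above, a classical consequence of the strict total positivity of the exponential kernel (Karlin); as an alternative, one could carry out a direct analysis of the zero structure of $L(\alpha)$ using the explicit form of $\tilde\psi$, exploiting that on $[-y_2,y_2]$ the asymptotics of $L(\alpha)$ as $\alpha\to\pm\infty$ are dominated by the negative lobe of $\psi$ near $|x|=y_2$.
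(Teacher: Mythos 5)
Your proof is correct, and the handling of the two cases splits interestingly against the paper's. For case (A) ($a\neq b$) the argument is essentially identical to the paper's: both use the linear factor $(x-a)$ to make $\psi(x)(x-a)$ of constant sign on $(-a,b)$, and combine the two integral Lagrange conditions to get $0<0$. For case (B) ($a=b$, $\alpha\neq 0$) the routes genuinely diverge. The paper gives a self-contained elementary argument: after reducing to $\int_0^a r(x)(e^{\alpha x}\pm e^{-\alpha x})\,dx$, it multiplies $r$ by $(x-c)$ to get $\int_0^a r(e^{\alpha x}-e^{-\alpha x})>0$, deduces $\int_0^a e^{\alpha x}r>0>\int_0^a e^{-\alpha x}r$, and then uses the pointwise inequality $e^{2\alpha c-\alpha x}r(x)\geq e^{\alpha x}r(x)$ to squeeze out a contradiction. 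You instead view the two integral conditions as $L(\alpha)=L'(\alpha)=0$ for the Laplace transform $L(\alpha)=\int_{-a}^a\psi\,e^{\alpha x}\,dx$, exploit the evenness of $L$ to double the zero to $\pm\alpha$, and appeal to the total-positivity (Karlin/Chebyshev-system) bound that $L$ can have at most as many real zeros, counted with multiplicity, as $\psi$ has sign changes, namely two. That is a cleaner conceptual frame and makes the parity of the situation visible, but it imports a nontrivial external theorem; the paper's argument is longer but elementary and does not rely on the theory of extended totally positive kernels.

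One caution worth flagging: the statement you invoke is not the usual variation-diminishing property, which only bounds the number of \emph{sign changes} of $L$ and would not by itself exclude a double zero at $\pm\alpha$ where $L$ does not change sign. What you need, and what is true, is the multiplicity-counting version: since $\{x^k e^{\alpha_j x}\}$ for distinct $\alpha_j$ is an extended complete Chebyshev system, a nonzero $\psi$ with $n$ sign changes has $\int\psi\,e^{\alpha x}\,dx$ with at most $n$ zeros counting multiplicities. It would strengthen the write-up to cite that precise form (Karlin, \emph{Total Positivity}), or to include the short Chebyshev-system argument, rather than the generic variation-diminishing statement. With that refinement the proof is complete.
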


\begin{proof}
We will use the method of Lagrange multipliers. If $(a, b, \alpha, \beta)$ is a critical point, then 
$$DG(a, b, \alpha, \beta) - \lambda_1DF_1(a, b, \alpha, \beta) -
\lambda_2DF_2(a, b, \alpha, \beta) = 0$$
for some $\lambda_1, \lambda_2$.
After calculating the corresponding partial derivatives and simplifying we get
\begin{equation} \label{eq:Lagrange1}
    a^p - \lambda_2a^q - \lambda_1 = 0, \\
\end{equation}
\begin{equation} \label{eq:Lagrange2}
     b^p - \lambda_2b^q - \lambda_1 = 0, \\
\end{equation}
\begin{equation} \label{eq:Lagrange3}
       \int_{-a}^be^{\alpha x}(|x|^p - \lambda_2|x|^q - \lambda_1)dx = 0, \\
\end{equation}
\begin{equation} \label{eq:Lagrange4}
    \int_{-a}^bxe^{\alpha x}(|x|^p - \lambda_2|x|^q - \lambda_1)dx = 0.
\end{equation}

First, suppose that $a \neq b$, without loss of generality $a<b$, then the first two equations are linearly independent as equations for $\lambda_1, \lambda_2$, so they uniquely determine $\lambda_1, \lambda_2$. Let $r(x) = |x|^p - \lambda_2|x|^q - \lambda_1$. It is an even function with zeroes in $a, b$. The function $r(x^{1/q})$ is strictly convex for $x \geq 0$ and has two zeroes $a^q, b^q$, so it is negative on $(a^q, b^q)$ and positive on $[0, +\infty) \setminus [a^q, b^q]$. Therefore, for $|x| <a$, we have $r(x) > 0$ and for $|x| \in (a, b)$ we have $r(x) < 0$. Thus, we get that
$(a-x)r(x) > 0$ almost everywhere on $(-a, b)$, so
$$a \int_{-a}^b e^{\alpha x} r(x) dx > \int_{-a}^b xe^{\alpha x} r(x) dx$$
which, by equations (\ref{eq:Lagrange3}), (\ref{eq:Lagrange4}) and $a > 0$ leads to $0 > 0$, a contradiction.

Now, suppose that $b = a$. Fix some $\lambda_1, \lambda_2$ satisfying $a^p - \lambda_2 a^q - \lambda_1$ and define $r(x)$ as before. Since $\int_{-a}^a e^{\alpha x}r(x) dx = 0$ and $r$
 is even, $r$ has to change sign somewhere on $(0, a)$, so there is a $c \in (0, a)$, such that $r(c) = 0$. We argue as before, that $r$ is positive on $(0, c)$ and negative on $(c, a)$.
 
Suppose that $\alpha \neq 0$, without loss of generality $\alpha > 0$, $(a, a, \alpha, \beta)$ is a critical point. We have
$$\int_0^a r(x)(e^{\alpha x} + e^{- \alpha x}) dx = 0, \quad \int_0^a xr(x)(e^{\alpha x} - e^{- \alpha x}) dx = 0.$$
For $x \in [0, a]$ we have $r(x)(x-c) \leq 0$, so
$$xr(x)(e^{\alpha x} - e^{- \alpha x}) \leq cr(x)(e^{\alpha x} - e^{- \alpha x})$$
and the inequality is strict almost everywhere, so
$$0 = \int_0^a xr(x)(e^{\alpha x} - e^{- \alpha x}) dx <
c \int_0^ar(x)(e^{\alpha x} - e^{- \alpha x}) dx.$$
We also know that $\int_0^a e^{\alpha x} r(x) dx = - \int_0^a e^{-\alpha x} r(x) dx$, so $\int_0^a e^{\alpha x} r(x) > 0$, $\int_0^a e^{-\alpha x} r(x) dx < 0$. Note that $e^{2\alpha c - \alpha x}r(x) \geq e^{\alpha x} r(x)$ - this is because $(e^{2\alpha c} - e^{2\alpha x})r(x) \geq 0$. Therefore
$$0 > e^{2\alpha c} \int_0^a e^{-\alpha x} r(x) dx = \int_0^a e^{2\alpha c- \alpha x}r(x) dx \geq \int_0^a e^{\alpha x} r(x) dx > 0.$$
This is a contradiction. We have shown that if $(a, b, \alpha, \beta)$ is a critical point, then it corresponds to symmetric uniform distribution. 
\end{proof}

By Theorem \ref{thm:sym}. for symmetric variables we have $\frac{\|X\|_p}{\|X\|_q} \leq \frac{\|E\|_p}{\|E\|_q}$, where $E$ has exponential distribution, so we can focus on the limiting cases.

\bigskip
\noindent
\textbf{Observation. } Let $X$ be a random variable with density $\mathbbm 1_{[-a, b]}e^{\alpha x + \beta}$ and $\|X\|_q = 1$, $\alpha \geq 0$. Then there is a constant $M_q$ which depends only on $q$ such that $b \leq M_q$.\\
\begin{proof}
Let $M_q$ be a unique positive real number such that
$M_q^{q+1} - (q+1)M_q - q = 0$. This is well defined because the function $x^{q+1} - (q+1)x - q$ is convex, has negative value at $x = 0$ and tends to infinity with $x$ going to infinity.
Suppose that $\alpha \geq 0$, $b > M_q$. We have
$$1 = \|X\|_q^q = \int_{-a}^b|x|^q e^{\alpha x + \beta}dx =
1 + e^{\beta}\int_{-a}^b \big(|x|^q-1\big) e^{\alpha x} dx.$$
Observe, that the function $|x|^q-1$ takes negative values in $(-1, 1)$ and positive values outside of $[-1, 1]$. Thus,
$$0 = \int_{-a}^b \big(|x|^q-1\big) e^{\alpha x} dx > 
\int_{-1}^{M_q} \big(|x|^q-1)e^{\alpha x} dx.$$
For $x \geq -1$ we have $e^{\alpha x}\big(|x|^q-1\big) \geq e^{\alpha}\big(|x|^q-1\big)$. From this
$$0 > \int_{-1}^{M_q} \big(|x|^q-1)e^{\alpha x} dx \geq e^{\alpha}\int_{-1}^{M_q} \big(|x|^q-1\big) dx.$$
But by simple calculation
$$\int_{-1}^{M_q} \big( |x|^q - 1\big) dx =  \frac1{q+1}\left(M_q^{q+1} - (q+1)M_q - q\right) = 0.$$
This is a contradiction.
\end{proof}

Denote the set of random variables with densities of the form $\mathbbm 1_{[-a, b]}e^{\alpha x + \beta}$ for $a, b > 0$ by $\mathcal F$. Observe that because $\mathbbm 1_{[-a, b]}e^{\alpha x + \beta}$ is a probability density, we get that $e^{-\beta} = \int_{-a}^b e^{\alpha x}$, so $\beta$ is a function of $\alpha, a, b$. Therefore the set $\mathcal F$ can be parametrized in $A = \mathbb R \times (0, \infty)^2 \subset \mathbb R^3$ in such a way, that the point $(\alpha, a, b)$ corresponds to the variable with density $\mathbbm 1_{[-a, b]}e^{\alpha x + \beta}$, where $\beta$ is uniquely determined. For $y=(\alpha, a, b)$, denote the random variable corresponding to $y$ by $X(y)$. Then, $\|X(y)\|_s$ is a differiantiable function in $A$ for any $s > 0$. Let $(y_n)_{n=1}^{\infty}$, $y_n=(\alpha_n, a_n, b_n)$ be such a sequence of points in $A$, that for each $n$ we have  $\|X(y_n)\|_q = 1$ (denote this set by $A_1$, by previous considerations it is a differentiable manifold). Suppose also that
$$\lim_{n \to \infty}\|X(y_n)\|_p = \sup_{\substack{X \in \mathcal F\\ \|X\|_q = 1}} \|X\|_p.$$
If the sequence $y_n$ is bounded, then it contains a subsequence convergent to some $y_0$. The point $y_0$ has to be a critical point of  $y \mapsto \|X(y)\|_p$ in $A_1$. We have shown that such points correspond to symmetrical uniform distributions which can be taken out of consideration. If the sequence is not bounded, then it contains a subsequence $y_{n_i}$, for which some fixed coordinate tends to $\pm \infty$. This subsequence contains an infinite subsequence such that for all $y_n$ in this subsequence $\alpha_n \geq 0$ or for all of them $\alpha_n \leq 0$. By symmetry, without loss of generality for all $i$ we have $\alpha_{n_i} \geq 0$. Thus, the sequence $b_{n_i}$ is bounded (contained in $[0, M_q]$), so $\alpha_{n_i}$ tends to infinity or $a_{n_i}$ tends to infinity. If $a_{n_i}$ is bounded, then it contains a subsequence convergent to some $a_0$ and because $\alpha_{n_i}$ tends to infinity, we get a subsequence of $X(y_{n_i})$ convergent in distribution to $\delta_{a_0}$, for which the supremum of $\|X(y)\|_p$ over $y \in A_1$ is obviously not achieved. 
 
By passing to subsequences, the only remaining case is that of sequence $(y_n)_{n=1}^{\infty}$, $y_n=(\alpha_n, a_n, b_n)$, where $\alpha_n, b_n$ are convergent and $a_n$ tends to infinity.
Then $X(y_n)$ converges in distribution to $-E + b$, where $b$ is limit of $b_n$ and $E$ is exponential distribution with parameter $\alpha = \lim_{n \to \infty} \alpha_n$. By symmetry and homogeneity of $\frac{\|X\|_p}{\|X\|_q}$ we get the statement of Theorem \ref{thm:shifted-exp}.

\subsection{Analysis of shifted exponential distributions}

Let $X$ have exponential distribution with parameter $1$, we will look at variables of the form $X - t$, where $t$ is a real number. The goal of this section is to prove that for any $t$ and $p > q \geq 2$ we have $$\|X-t\|_p \leq C_0 \frac pq \|X-t\|_q,$$
where $C_0 = e^{W(1/e)}$. If $t < 0$, then we can apply the Corollary \ref{porSym}., so from here on we assume that $t \geq 0$.
By $m_s(t)$ we will denote the $s$-th moment of such a random variable, so
$$m_s(t):=\Ex(E-t)^s = \int_{-t}^{\infty} e^{-x-t}|x|^s dx = 
\int_0^t e^{x-t}x^s dx + e^{-t}\Gamma(s+1) =: I(s, t) + e^{-t}\Gamma(s+1).$$
We can see that
$$I(s, t) = \int_0^t e^{x-t}x^s dx \leq t^s \int_0^t e^{x-t} = (1-e^{-t})t^s.$$

\begin{lemma}[Inequality for $\frac pq \geq 2$]\label{duzy iloraz}
If $\frac pq \geq 2$, $q \geq 2$ and $t \geq 0$, then $\frac{\|X-t\|_p}{\|X-t\|_q} \leq C_0 \frac pq$.
\end{lemma}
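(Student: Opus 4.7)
The strategy is to bound $\|X-t\|_p$ from above by Minkowski's inequality and $\|X-t\|_q$ from below using the monotonicity of $L^r$ norms on a probability space (which is where the hypothesis $q\geq 2$ enters), then to reduce the lemma to a one-variable scalar inequality independent of $t$, and finally to verify that scalar inequality using Stirling's formula together with the slack coming from $p/q\geq 2$.

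Concretely, Minkowski applied to $X-t=E-t$ gives $\|X-t\|_p \leq \|E\|_p+t=\Gamma(p+1)^{1/p}+t$. Since $q\geq 2$, the monotonicity of $L^r$ norms on a probability space yields
$$\|X-t\|_q \;\geq\; \|X-t\|_2 \;=\; \sqrt{\Var(E)+(\Ex E-t)^2} \;=\; \sqrt{1+(t-1)^2}.$$
Writing $A:=\Gamma(p+1)^{1/p}$ we therefore get
$$\frac{\|X-t\|_p}{\|X-t\|_q} \;\leq\; \frac{t+A}{\sqrt{1+(t-1)^2}},$$
and a short calculus computation shows the right-hand side is maximized at $t^{*}=1+\tfrac{1}{A+1}$ with maximum value $\sqrt{(A+1)^2+1}$. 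Hence the lemma reduces to the $t$-independent scalar inequality
$$\sqrt{(A+1)^2+1} \;\leq\; C_0\cdot\frac{p}{q}\cdot B, \qquad B:=\Gamma(q+1)^{1/q}.$$

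To verify this I would introduce $\eta:=f(p)/f(q)\leq 1$ and $M:=(p/q)B$, where $f(s)=\Gamma(s+1)^{1/s}/s$ is the decreasing function from Corollary \ref{gamma}, so that $A=\eta M$. After squaring, the target becomes $(C_0^{2}-\eta^{2})M^{2} \geq 2\eta M+2$. The assumption $p\geq 2q\geq 4$ gives $M\geq 2\sqrt{2}$, and Stirling's formula yields a quantitative upper bound on $\eta$ that forces $\eta$ away from $1$ in exactly the regime where $M$ is smallest, the resulting trade-off being calibrated precisely to the defining identity $C_0\ln C_0=1/e$ of $C_0=e^{W(1/e)}$ and leaving room to spare as soon as $p/q\geq 2$.

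\textbf{Main obstacle.} The scalar inequality is tightest at the corner $p=4$, $q=2$, where $\sqrt{(A+1)^2+1}\approx 3.37$ and $C_0(p/q)B \approx 3.74$; here the crude estimate $A\leq (p/q)B$ coming from Corollary \ref{gamma} alone wastes too much, and one must track the Stirling correction $\eta=f(p)/f(q)$ quantitatively (rather than merely noting $\eta<1$). The real work will therefore be to extract from Stirling's formula a sharp enough bound on $\eta$ to close the scalar inequality uniformly over $p\geq 2q\geq 4$, paying particular attention to this boundary corner and then arguing by monotonicity that no other configuration is worse.
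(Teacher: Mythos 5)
Your reduction is clean and, as far as I can check, correct: Minkowski gives $\|E-t\|_p\le \Gamma(p+1)^{1/p}+t$, the $L^2$ lower bound $\|E-t\|_q\ge\sqrt{1+(t-1)^2}$ is exact, and the maximization over $t$ (I verified $t^*=1+\tfrac{1}{A+1}$ and the value $\sqrt{(A+1)^2+1}$) removes the $t$-dependence. So you have correctly reduced the lemma to the scalar statement $\sqrt{(A+1)^2+1}\le C_0\,(p/q)\,B$, with $A=\Gamma(p+1)^{1/p}$, $B=\Gamma(q+1)^{1/q}$. This is a genuinely different route from the paper's: the paper instead bounds $m_p(t)$ by a \emph{convex combination} of $t^p$ and $\Gamma(p+1)$, giving the sharper estimate $\|E-t\|_p\le\max(t,\Gamma(p+1)^{1/p})$, splits into two cases accordingly, and in the harder case bounds $\|E-t\|_q$ from below not by the $L^2$ norm but by the value at the minimizer $t_q$ of $m_q$, using $m_q(t_q)=t_q^q$ and $t_q\ge qW(1/e)$. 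That minimizer analysis is what makes the Lambert $W$ function, and hence the precise value of $C_0$, emerge naturally.

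The genuine gap is that you never prove the scalar inequality $(C_0^2-\eta^2)M^2\ge 2\eta M+2$. You correctly identify that the bare bound $\eta\le1$ from Corollary~\ref{gamma} is insufficient (the worst point $p=4,q=2$ has $\sqrt{(A+1)^2+1}\approx3.37$ and naively $A\le M\approx 2.83$ only gives $\sqrt{(M+1)^2+1}\approx3.96>3.74$), and that a quantitative upper bound $\eta=h(p)/h(q)<1$, where $h(s)=(2\pi s)^{1/2s}e^{\mu(s)/s}$, is required. But you stop there, merely asserting that Stirling ``leaves room to spare.'' To close the argument one must actually show, for all $p\ge 2q\ge4$, that $h(p)^2+\tfrac{2e}{p}h(p)+\tfrac{2e^2}{p^2}\le C_0^2 h(q)^2$, which (after noting the left side decreases in $p$, so $p=2q$ is worst) becomes a one-variable inequality in $q$ whose two sides are not obviously comparable by a single monotonicity argument; at $q=2$ the values are roughly $5.23$ versus $6.45$, and the gap narrows again only in the limit $q\to\infty$. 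This is doable but is real work, and it is the entire content of the lemma once you've reduced to the scalar form.

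One remark in your plan is misleading and should be corrected. You write that the trade-off is ``calibrated precisely to the defining identity $C_0\ln C_0=1/e$.'' In your scalar inequality nothing is tight: at the worst corner the ratio of the two sides is about $0.90$, so your route proves a constant slightly smaller than $C_0$ would also work \emph{in this regime}, and the Lambert $W$ identity plays no structural role in it. The reason $C_0$ is the right constant for the theorem as a whole is the regime $p\gg q\gg 1$ with $t=W(1/e)q$ treated in Lemma~\ref{Szacowanie z dolsu}, and the paper's Case~2 analysis (via $u_q\ge qW(1/e)$) is what makes $C_0$ appear naturally there. Claiming tightness here would give a reader the wrong picture of where the extremal behaviour lives.
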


\begin{proof}
We can bound the $p-$th moment of $X-t$ from above in the following way:
$$m_p(t)=\Ex(X-t)^p \leq (1-e^{-t})t^p + e^{-t}\Gamma(p+1).$$
Observe that this is a convex combination of $t^p$ and $\Gamma(p+1)$, so
$$(\Ex(X-t)^p)^{1/p} \leq \max(t^p, \Gamma(p+1))^{1/p} = \max(t, \Gamma(p+1)^{1/p}).$$
First, we will show that the statement of this lemma is true, if the maximum is $t$. We have
$$\|X-t\|_q \geq \|X-t\|_2 = \sqrt{\Var(X) + (t-\Ex(X))^2} = ((t-1)^2 + 1)^{1/2} \geq \frac t{\sqrt 2}.$$
The last inequality can be proved by QM-AM inequality. From this, if $t \geq \Gamma(p+1)^{1/p}$, then
$$\frac{\|X-t\|_p}{\|X-t\|_q} \leq \sqrt 2 = (\sqrt 2 \frac qp) \frac pq \leq \frac 1{\sqrt 2} \frac pq < C_0 \frac pq.$$
Thus, it suffices to show that under conditions of the lemma the following is true
$$\Gamma(p+1)^{1/p} \leq C_0\frac pq \|X-t\|_q.$$
We have
$$m_q(t) = e^{-t}\int_{0}^{t} e^{x}x^q dx + e^{-t}\Gamma(q+1),$$
$$m_q'(t) = -m_q(t) + t^q,$$
$$m_q''(t) = m_q(t) - t^q + qt^{q-1} = qt^{q-1} - m'(t).$$
The derivative at zero is negative, so $m_q(t)$ is decreasing on some interval $[0, t_0]$, $u > 0$. If $m_q'(t) = 0$, then $m_q''(t) > 0$, so $m_q(t)$ cannot have a local maximum. If $m_q(t)$ had two local minima, then there would be a local maximum between them, so it can have at most one local minimum.
On the other hand, if $t^q > \Gamma(q+1)$, then $m_q(t) \leq \max(t^q, \Gamma(q+1)) \leq t^q$, which means that $m_q'(t) > 0$ and the function is increasing. This shows that there exists exactly one $t_q$, such that $m_q'(t_q) = 0$ and it is a global minimum of $m_q$. Moreover, because $m_q'(t_q) = 0$, we get $m_q(t_q) = t_q^q$, so it is sufficient to show that 
\begin{equation}\label{pom1}
   C_0 \frac pq t_q \geq \Gamma(p+1)^{1/p}. 
\end{equation}

By $u_q$ we denote the solution of equation $u^q = e^{-u}\Gamma(q+1)$. This equation has a unique solution -- at $u=0$ the left hand side is less than the right hand side, the left hand side increases to infinity with $u$ increasing and the right hand side is decreasing. Observe that $t_q^q = m_q(t_q) \geq e^{-t_q}\Gamma(q+1)$, so by previously observed monotonicities we get $t_q \geq u_q$.

Similarly, we can see that $u_q \geq qW(1/e)$. It is enough to show that $(qW(1/e))^q \leq e^{-qW(1/e)}\Gamma(q+1)$, which can be rewritten into $\big( \frac qe \big)^q \leq \Gamma(q+1)$ and this inequality is true.

Let us introduce the notation $\Delta = \Delta(q) = u_q - qW(1/e) \geq 0$. By Stirling's formula and the definition of $u_q$ we get
$$(qW(1/e))^q \Big(1 + \frac{\Delta}{qW(1/e)}\Big)^q = e^{-\Delta}e^{-qW(1/e)}\Big(\frac qe\Big)^q \sqrt{2 \pi q} e^{\mu(q)}.$$
We can see that $e^{-qW(1/e)} = (eW(1/e))^q$, so
\begin{equation}\label{rown delta}
e^{\Delta}\Big(1 + \frac{\Delta}{qW(1/e)}\Big)^q = \sqrt{2 \pi q} e^{\mu(q)}.  
\end{equation}
Now we will show that $\sqrt{2 \pi q} e^{\mu(q)} \leq 2^q$. Equivalently,  $\sqrt {2\pi} 2^{-q}\sqrt q e^{\mu(q)} \leq 1$. The function $2^{-q}\sqrt q$ is decreasing for $q \geq 2$ (by calculating the derivative), $e^{\mu(q)}$ is decreasing, so it remains to check the inequality at $q = 2$. We have
$$\sqrt {2\pi \cdot 2} e^{\mu(2)} = \frac{\Gamma(3)}{(\frac 2e)^2} = \frac{e^2}2 < 4 = 2^2.$$
This means that 
$$\Big(1+\frac{\Delta}{qW(1/e)}\Big)^q \leq e^{\Delta}\Big(1+\frac{\Delta}{qW(1/e)}\Big)^q \leq 2^q = (1+1)^q,$$
so $\frac{\Delta}{qW(1/e)} \leq 1$. By convexity of the exponential function, and the fact that $1+x \geq e^{x\ln 2 }$ for $x \in \{ 0, 1\}$, we get $1+\frac{\Delta}{qW(1/e)} \geq e^{ \frac {\Delta}{qW(1/e)}\ln 2}$.
This gives the following estimate
$$\Big(1 + \frac{\Delta}{qW(1/e)}\Big)^q \geq e^{ q\frac {\Delta}{qW(1/e)}\ln 2} =
 e^{\Delta \frac {\ln 2}{W(1/e)}} \geq e^{2 \Delta},$$
 because $\frac{\ln 2}{W(1/e)} \approx 2.48 > 2$. Therefore
 $$e^{\Delta}\Big(1 + \frac{\Delta}{qW(1/e)}\Big)^q \leq \Big(1 + \frac{\Delta}{qW(1/e)}\Big)^{\frac 32 q},$$
 so from an earlier equality (\ref{rown delta}) it follows that
 \begin{equation}\label{pom2}
    1 + \frac{\Delta}{qW(1/e)} \geq \Big(\sqrt{2\pi q} e^{\mu(q)}\Big)^{\frac 2{3q}}. 
 \end{equation}
We have
 $$C_0 \frac pq t_q \geq C_0 \frac pq u_q = C_0 \frac pq qW(1/e)\Big(1+\frac{\Delta}{qW(1/e)}\Big),$$
 so in order to finish the proof of \eqref{pom1}, it is enough to show that the last expression is not less than $\Gamma(p+1)^{1/p}$. After raising both sides to the $p-$th power we get the following inequality to show:
 $$\Big(C_0 \frac pq qW(1/e)\Big(1+\frac{\Delta}{qW(1/e)}\Big)\Big)^p \geq \Gamma(p+1).$$
 We see that $C_0 = \frac 1{eW(1/e)}$, therefore
 $$\Big(C_0 \frac pq qW(1/e)\Big)^p = \Big(\frac pe\Big)^p,$$
 so it remains to show that
 $$\Big(1+\frac{\Delta}{qW(1/e)}\Big)^p \geq \sqrt{2 \pi p} e^{\mu(p)}.$$
 By the inequality \eqref{pom2}, it is sufficient to prove
 $$\Big(\sqrt{2\pi q} e^{\mu(q)}\Big)^{\frac {2p}{3q}} \geq  \sqrt{2 \pi p} e^{\mu(p)}.$$
 Since $\frac pq \geq 2 > \frac 32$, then by monotonicity of $\mu$ we have $e^{\frac 23 \frac pq\mu(q)} \geq e^{\mu(q)} \geq e^{\mu(p)}$. Thus, it is enough to show $\Big(\sqrt{2\pi q}\Big)^{\frac {2p}{3q}} \geq  \sqrt{2 \pi p}$, which is the same as $\Big(\sqrt{2\pi q}\Big)^{\frac {2p}{3q}-1} \geq \sqrt{\frac pq}$. Denote $\lambda = \frac pq \geq 2$, now we use the condition $q \geq 2$ and our goal transforms to
 $$\sqrt{4\pi}^{\frac 23 \lambda -1} \geq \sqrt \lambda.$$
 After taking logarithms of both sides:
 $$\Big(\frac 23 \lambda - 1\Big)\ln(4\pi) \geq \ln \lambda.$$
 For $\lambda \geq 2$ the derivative of the right side is not greater than $\frac 12$, while the derivative of the left side is $\frac 23 \ln 4\pi > \frac 12$,  so it suffices to check the inequality for $\lambda = 2$. At $\lambda = 2$ the left hand side is $\approx 0.84$, while the right hand side is $\approx 0.69$, which proves the statement of the lemma.
\end{proof}
$ $\\
Recall that $I(s, t) = \int_0^t e^{x-t}x^s dx$.

\begin{lemma} \label{szacowanie I}
Fix $t$. Then the function $t^{-s}I(s, t)$ decreases with $s$ for $s > 0$.
\end{lemma}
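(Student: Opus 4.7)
The plan is to make the substitution $u = x/t$ in the integral defining $I(s,t)$, which absorbs the factor $t^{-s}$ cleanly and leaves an integral in which the dependence on $s$ appears only through the weight $u^s$ with $u \in [0,1]$. Concretely, writing
\[
t^{-s} I(s,t) = t^{-s} \int_0^t e^{x-t} x^s \, dx = t \int_0^1 e^{t(u-1)} u^s \, du,
\]
reduces the claim to showing that $J(s) := \int_0^1 e^{t(u-1)} u^s \, du$ is nonincreasing in $s$ for each fixed $t > 0$.

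Next I would differentiate $J$ under the integral sign. The integrand $e^{t(u-1)} u^s$ and its partial derivative in $s$, namely $e^{t(u-1)} u^s \ln u$, are continuous in $(s,u)$ on $(0,\infty) \times (0,1]$ and dominated locally in $s$ by an integrable function of $u$ (the factor $u^s \ln u$ is integrable near $0$ for any $s > 0$, and $e^{t(u-1)}$ is bounded by $1$). Therefore differentiation under the integral is justified and
\[
J'(s) = \int_0^1 e^{t(u-1)} u^s \ln(u) \, du.
\]

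Since $u \in (0,1)$ forces $\ln u \leq 0$ while $e^{t(u-1)} u^s \geq 0$, the integrand is nonpositive, so $J'(s) \leq 0$ for all $s > 0$. Hence $J$ is nonincreasing, and consequently $t^{-s} I(s,t) = t \cdot J(s)$ is nonincreasing in $s$, which is the claim. There is no real obstacle here; the only thing to be slightly careful about is the dominated convergence justification for differentiating under the integral near $u = 0$, which goes through because $u^s |\ln u|$ is integrable on $(0,1)$ for every $s > 0$.
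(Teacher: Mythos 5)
Your proof is correct and rests on the same core observation as the paper's: after factoring $t^{-s}$ into the integral, the $s$-dependence sits entirely in $(x/t)^s$ (equivalently $u^s$ with $u\in[0,1]$), which is decreasing in $s$. The paper stops there, appealing directly to pointwise monotonicity of the integrand, whereas you differentiate under the integral sign to reach the same conclusion; your route is a bit more machinery than needed but perfectly sound.
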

\begin{proof}
We have
$$t^{-s}I(s, t) = \int_0^t e^{x-t}\Big(\frac xt\Big)^s dx.$$
The expression inside the integral is decreasing with respect to $s$.
\end{proof}

\begin{lemma} \label{szacowanie max}
If $X$ has exponential distribution with parameter $1$ and $1 \leq q \leq p \leq 2q$, then
$$\frac{\|X-t\|_p}{\|X-t\|_q} \leq \frac pq \cdot \max((1-e^{-t})^{-1/2q}, e^{t/2q}).$$
\end{lemma}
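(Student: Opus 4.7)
The plan is to decompose $X-t$ using the memorylessness of the exponential distribution and then apply Corollary \ref{porSym} to each piece, ending with a weighted power-mean comparison of the type already used in the proof of Lemma \ref{Ap}. Let $r := \Pr(X<t) = 1-e^{-t}$, and consider the two positive log-concave random variables $X_1 := (t-X)\mid\{X<t\}$, supported on $[0,t]$ with density proportional to $e^{y-t}$, and $X_2 := (X-t)\mid\{X \geq t\}$, which by memorylessness is itself a standard exponential. Writing $a_i := \|X_i\|_p$ and $b_i := \|X_i\|_q$, Corollary \ref{porSym} yields $a_i \leq \tfrac{p}{q} b_i$, and since $m_s(t) = r\,\Ex X_1^s + (1-r)\,\Ex X_2^s$, raising to the $1/p$ power gives
\begin{equation*}
\|X-t\|_p \leq \frac{p}{q}\bigl(r b_1^p + (1-r) b_2^p\bigr)^{1/p}.
\end{equation*}

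Next I would compare the weighted $p$-mean of $(b_1,b_2)$ to its $q$-mean. Assume first that $r \leq 1/2$ and set $\alpha = (1-r)/r \geq 1$, $T = b_2/b_1$. Factoring $r^{1/p}b_1$ (respectively $r^{1/q}b_1$) out reduces the task to showing $(1+\alpha T^p)^{1/p} \leq (1+\alpha T^q)^{1/q}$, which follows from the subadditivity chain $(1+\alpha T^p)^{q/p} \leq 1 + \alpha^{q/p} T^q \leq 1 + \alpha T^q$ using $q/p \leq 1$ and $\alpha \geq 1$ -- exactly the device invoked in the proof of Lemma \ref{Ap}. Consequently
\begin{equation*}
\bigl(r b_1^p + (1-r) b_2^p\bigr)^{1/p} \leq r^{1/p-1/q}\bigl(r b_1^q + (1-r) b_2^q\bigr)^{1/q} = r^{1/p-1/q}\|X-t\|_q.
\end{equation*}
If instead $r > 1/2$, the symmetric choice $\alpha = r/(1-r) \geq 1$ produces the analogous inequality with $r$ replaced by $1-r = e^{-t}$.

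It remains to dispose of the factor $r^{1/p-1/q}$ (respectively $(1-r)^{1/p-1/q}$). Since $\tfrac{1}{p}-\tfrac{1}{q} = -\tfrac{p-q}{pq}$ and the hypothesis $p \leq 2q$ gives $\tfrac{p-q}{pq} \leq \tfrac{1}{2q}$, while $r \leq 1$, one obtains $r^{1/p-1/q} \leq r^{-1/(2q)} = (1-e^{-t})^{-1/(2q)}$ in the first case and $(1-r)^{1/p-1/q} \leq e^{t/(2q)}$ in the second, yielding the claimed maximum. The only genuinely analytic step is the weighted power-mean comparison, and the sole place where the hypothesis $p \leq 2q$ is used is in pinning the residual exponent to $\tfrac{1}{2q}$; since the decomposition is tailor-made for the exponential distribution (thanks to memorylessness) I do not anticipate any real obstacle.
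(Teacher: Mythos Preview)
Your proof is correct and follows essentially the same plan as the paper: split $|X-t|^s$ into its left and right contributions (the paper writes these as $I(s,t)$ and $e^{-t}\Gamma(s+1)$ rather than via conditional variables), apply Corollary~\ref{porSym} to each piece, and combine using the subadditivity of $x\mapsto x^{q/p}$. The only cosmetic difference is in the final bookkeeping: the paper uses the elementary bound $\tfrac{a+b}{c+d}\le\max(\tfrac{a}{c},\tfrac{b}{d})$ to obtain the maximum of the two factors directly, whereas your case split on $r\lessgtr\tfrac12$ reaches the same maximum by always landing on the larger of the two.
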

\begin{proof}
We have
$$\frac{\|X-t\|_p}{\|X-t\|_q} = \frac{\Big(I(p, t) + e^{-t}\Gamma(p+1)\Big)^{1/p}}{\Big(I(q, t) + e^{-t}\Gamma(q+1)\Big)^{1/q}}.$$
By the triangle inequality in $l_{p/q}$ we get
$$\Big(I(p, t) + e^{-t}\Gamma(p+1)\Big)^{q/p} \leq I(p, t)^{q/p} + e^{-tq/p}\Gamma(p+1)^{q/p},$$
thus
$$\frac{\Big(I(p, t) + e^{-t}\Gamma(p+1)\Big)^{1/p}}{\Big(I(q, t) + e^{-t}\Gamma(q+1)\Big)^{1/q}}
\leq \Big(\frac{I(p, t)^{q/p} + e^{-tq/p}\Gamma(p+1)^{q/p}}{I(q, t) + e^{-t}\Gamma(q+1)}\Big)^{1/q}.$$
Now we will use the fact that for positive $a, b, c, d$ we have $\frac{a+b}{c+d} \leq \max(\frac ac, \frac bd)$. From this we can conclude that
$$\frac{\|X-t\|_p}{\|X-t\|_q} \leq \max\Big(\frac{I(p,t)^{1/p}}{I(q, t)^{1/q}}, e^{t/q-t/p}\frac{\Gamma(p+1)^{1/p}}{\Gamma(q+1)^{1/q}}\Big).$$
Observe that since $\frac pq \leq 2$, then $e^{t/q-t/p} \leq e^{t/2q}$. Moreover, from Corrolary \ref{gamma} $\frac{\Gamma(p+1)^{1/p}}{\Gamma(q+1)^{1/q}} \leq \frac pq$, so if the maximum is the second expression, then the lemma is true.

Now, consider a random variable $Y$ with density
$(1-e^{-t})^{-1}e^{x-t}\mathbbm{1}_{[0, t]}$. It is a nonnegative log-concave random variable, so $\|Y\|_p \leq \frac pq \|Y\|_q$. We can see that $\|Y\|_s = ((1-e^{-t})^{-1}I(s, t))^{1/s}$, so
$$I(p, t)^{1/p} \leq \frac pq (1-e^{-t})^{1/p-1/q}I(q, t)^{1/q}.$$
Therefore $\frac{I(p,t)^{1/p}}{I(q, t)^{1/q}} \leq \frac pq (1-e^{-t})^{1/p-1/q} \leq \frac pq (1-e^{-t})^{-1/2q}$, which concludes the proof of the lemma.
\end{proof}

\bigskip
\noindent
\textbf{Notation. } Let $t = r \frac qe$ and denote $r_0 = eW(1/e)$. Then $C_0 = 1/r_0 = e^{r_0/e}$.

\begin{corollary}[Inequality for $r \leq 2r_0$]\label{NierMale r}
If $t = r \frac qe$, $2q \geq p \geq q \geq 2$ and $r \leq 2r_0$, then $\frac{\|X-t\|_p}{\|X-t\|_q} \leq C_0 \frac pq$.
\end{corollary}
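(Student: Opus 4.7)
The plan is to derive everything from Lemma \ref{szacowanie max}, which gives $\|X-t\|_p/\|X-t\|_q \leq (p/q)\max((1-e^{-t})^{-1/(2q)}, e^{t/(2q)})$. After substituting $t = rq/e$ the target inequality reduces to showing this max is at most $C_0$. The second term equals $e^{r/(2e)}$ and is $\leq C_0 = e^{r_0/e}$ exactly when $r \leq 2r_0$, which is the hypothesis. The first term, however, blows up as $t \to 0$, so the lemma alone cannot handle small $r$. I therefore split $[0,2r_0]$ at the threshold $r_0$.

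For $r \in [r_0, 2r_0]$ I stay with Lemma \ref{szacowanie max}. The key observation is the identity $e^{-r_0/e} = r_0$, which follows from $W(1/e)e^{W(1/e)} = 1/e$ and $r_0 = eW(1/e)$. This yields $e^{-r_0 q/e} = r_0^q$, so for $r \geq r_0$ we have $1-e^{-t} \geq 1 - r_0^q$. The required bound $(1-r_0^q)^{-1/(2q)} \leq C_0 = r_0^{-1}$ then reduces to $r_0^q + r_0^{2q} \leq 1$, equivalently $r_0^q \leq (\sqrt{5}-1)/2$. Since $r_0 < 1$ the left side is monotone decreasing in $q$, so it reduces to a quick numerical check at $q = 2$.

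For $r \in [0, r_0]$ I bypass Lemma \ref{szacowanie max} and argue directly. From the elementary estimate $I(p,t) \leq (1-e^{-t})t^p$ that is used throughout the section, $m_p(t) \leq e^{-t}\Gamma(p+1) + (1-e^{-t})t^p$ is a convex combination of $\Gamma(p+1)$ and $t^p$. Stirling's formula from Section \ref{section:Stirling} gives $\Gamma(p+1)^{1/p} \geq p/e$, so from $r_0 < 1$ and $p \geq q$,
\[
t = \frac{rq}{e} \leq \frac{q}{e} \leq \frac{p}{e} \leq \Gamma(p+1)^{1/p},
\]
hence $t^p \leq \Gamma(p+1)$, the convex combination is $\leq \Gamma(p+1)$, and $\|X-t\|_p \leq \Gamma(p+1)^{1/p}$. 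Combining with the trivial lower bound $\|X-t\|_q \geq e^{-t/q}\Gamma(q+1)^{1/q}$ and Corollary \ref{gamma},
\[
\frac{\|X-t\|_p}{\|X-t\|_q} \leq e^{t/q}\frac{\Gamma(p+1)^{1/p}}{\Gamma(q+1)^{1/q}} \leq \frac{p}{q}\, e^{r/e} \leq \frac{p}{q}\, e^{r_0/e} = C_0\, \frac{p}{q}.
\]

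The main obstacle is that Lemma \ref{szacowanie max} is essentially tight when $r$ is near $2r_0$ but too crude for small $t$, where $(1-e^{-t})^{-1/(2q)}$ diverges while the true ratio stays close to the unshifted moment ratio $\Gamma(p+1)^{1/p}/\Gamma(q+1)^{1/q}$. Identifying $r_0$ — the fixed-point-like value satisfying $e^{-r_0/e} = r_0$ — as the splitting threshold at which the lemma and the direct estimate meet is the heart of the argument.
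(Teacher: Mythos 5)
Your proof is correct, but it takes a genuinely different route from the paper. The paper splits at the fixed threshold $t = 0.4$ (independent of $q$); for $t \geq 0.4$ it bounds $(1-e^{-t})^{-1/(2q)}$ by reducing to $q=2$ and checking numerically, and for $t \leq 0.4$ it shows the moments $m_s(t)$ satisfy $m_s'(t) = t^s - m_s(t) < 0$ for $s \geq 2$, giving $\|X-t\|_s \leq \|X\|_s$, then compares with the unshifted exponential using $C_0 e^{-0.4} > 1$. You instead split at the $q$-dependent threshold $r = r_0$ (i.e.\ $t = W(1/e)\,q$). In the upper range you exploit the fixed-point identity $e^{-r_0/e} = r_0$ to turn the required bound into the clean algebraic inequality $r_0^q + r_0^{2q} \leq 1$, settled at $q=2$ by monotonicity. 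In the lower range you avoid the derivative analysis of $m_s$ entirely: since $r_0 < 1$ gives $t \leq q/e \leq p/e \leq \Gamma(p+1)^{1/p}$, the convex-combination bound for $m_p(t)$ collapses to $\|X-t\|_p \leq \Gamma(p+1)^{1/p}$, and pairing this with the trivial lower bound $\|X-t\|_q \geq e^{-t/q}\Gamma(q+1)^{1/q}$ and Corollary~\ref{gamma} closes the argument with $e^{r/e} \leq e^{r_0/e} = C_0$. Your version is somewhat more transparent: the threshold $r_0$ is structurally motivated by the Lambert-function identity rather than chosen by numerical trial, and the small-$t$ case needs no calculus, only the $\max(t,\Gamma(p+1)^{1/p})$ estimate already in play. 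The paper's version, on the other hand, has the merit of a single numerical constant $0.4$ and reuses the moment-derivative machinery introduced in Lemma~\ref{duzy iloraz}.
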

\begin{proof}
First, assume that $t \geq 0.4$. In this case we can directly use the previous lemma. Since $r \leq 2r_0$, we see that $e^{t/2q} = e^{r/2e} \leq e^{r_0/e} = C_0$. In order to show that for $q \geq 2$ we have $(1-e^{-t})^{-1/2q} \leq C_0$, it is enough to show this at $q=2$. The inequality becomes $e^{-t} \leq 1-C_0^{-4}$, which is the same as $-t \leq \ln(1 - C_0^{-4}) \approx -0.398$. This means that $t \geq 0.4$ is enough.

Now, let us assume that $t \leq 0.4$. By the proof of the Lemma \ref{duzy iloraz}., $m_s'(t) = t^s - m_s(t)$. If $t \leq 0.4$ and $s \geq 2$, then
$$t^s - m_s(t) \leq 0.16 - e^{-t}\Gamma(s+1)^{1/s} \leq 0.16 - e^{-0.4} \sqrt 2 < 0.$$
From this we can conclude, that for $t \leq 0.4$, $s \geq 2$ we have $\|X-t\|_s \leq \|X\|_s$. Thus, using the moments of exponential distribution we have
$$\|X-t\|_p \leq \|X\|_p \leq \frac pq \|X\|_q.$$
On the other hand, $C_0 \|X-t\|_q \geq C_0 e^{-0.4}\Gamma(q+1)^{1/q} = C_0e^{-0.4} \|X\|_q$. It remains to calculate that $C_0e^{-0.4} \geq 1.08 > 1$, which proves the corollary.
\end{proof}

\bigskip
\noindent
\textbf{Notation. }In the remaining part of the article $g(x)$ will denote the following function
$$g(x) := \Gamma(x+1)\left(\frac xe\right)^{-x} = \sqrt{2\pi x} e^{\mu(x)}.$$
Observe that since $\mu$ is decreasing, we have $g(\lambda x) \leq \sqrt \lambda g(x)$ for $x > 0$, $\lambda \geq 1$.

\bigskip
\noindent
\textbf{Observation. } In order to prove the Theorem \ref{thm:bestConst}. it is enough to prove that
$$\Big(\alpha\Big(\frac r\lambda\Big)^{q\lambda} + \sqrt{\lambda}e^{-rq/e}g(q)\Big)^{1/\lambda} \leq C_0^q\Big(\alpha r^q + e^{-rq/e}g(q)\Big)$$
for $\lambda = \frac pq \leq 2$, $\alpha = t^{-q}I(q,t) \geq t^{-p}I(p, t)$, $t = r\frac qe$ and $r \geq 2r_0$.

\begin{proof}
The Theorem \ref{thm:bestConst}. has already been proven in the cases where $\lambda \geq 2$ or $r \leq 2r_0$.
Using the above notation:
\begin{align*}
\frac{\|X-t\|_p}{\|X-t\|_q} =
\frac{\Big(I(p, t) + e^{-t}\Gamma(p+1)\Big)^{1/p}}{\Big(I(q, t) + e^{-t}\Gamma(q+1)\Big)^{1/q}} &\leq \frac{\Big(\alpha t^p + e^{-t}\Gamma(p+1)\Big)^{1/p}}{\Big(\alpha t^q + e^{-t}\Gamma(q+1)\Big)^{1/q}} \\= 
\frac{\Big(\alpha r^p(\frac pe)^p (\frac qp)^p + e^{-t}(\frac pe)^pg(p)\Big)^{1/p}}{\Big(\alpha r^q(\frac qe)^q + e^{-t}(\frac qe)^qg(q)\Big)^{1/q}} &= 
\frac pq \cdot \Bigg(\frac{\Big(\alpha (\frac r{\lambda})^{p} + e^{-t}g(p)\Big)^{1/\lambda}}{\alpha r^q + e^{-t}g(q)}\Bigg)^{1/q}.
\end{align*}

We want to show that this last expression is less than $C_0 \frac pq$, after transformations the inequality becomes
$$\Big(\alpha \Big(\frac r{\lambda}\Big)^{\lambda q} + e^{-rq/e}g(p)\Big)^{1/\lambda} \leq C_0^q \Big(\alpha r^q + e^{-rq/e}g(q)\Big).$$
Using $g(p) = g(\lambda q) \leq \sqrt \lambda g(q)$, we conclude the statement.
\end{proof}

\begin{lemma}\label{Szacowanie alfa}
For $q \geq 2$, $t = \frac {rq}e$, $r \geq 2r_0 = 2eW(1/e)$ we can estimate
$$\alpha = t^{-q}\int_0^t e^{x-t}x^q dx \geq \frac 19.$$
\end{lemma}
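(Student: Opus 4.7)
The plan is to simplify $\alpha$ via a substitution and then apply Bernoulli's inequality to obtain a clean lower bound. First I would substitute $x = t - u$ to rewrite
$$\alpha = \int_0^t e^{-u}\left(1 - \frac{u}{t}\right)^q du.$$
Since $q \geq 2 > 1$ and $u/t \in [0,1]$, Bernoulli's inequality gives $(1 - u/t)^q \geq 1 - qu/t$. This lower bound is nonnegative exactly on $[0, t/q]$, so restricting the integration domain there (and discarding the remaining nonnegative contribution) yields
$$\alpha \geq \int_0^{t/q} e^{-u}\left(1 - \frac{qu}{t}\right)du.$$

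Next, a direct computation using $\int_0^s e^{-u}du = 1 - e^{-s}$ and $\int_0^s u e^{-u}du = 1 - (1+s)e^{-s}$ with $s = t/q$ simplifies the right-hand side to
$$\alpha \geq 1 - \frac{q}{t}\left(1 - e^{-t/q}\right).$$
Setting $a := t/q = r/e$, the hypothesis $r \geq 2r_0 = 2eW(1/e)$ translates to $a \geq 2W(1/e)$. The function $a \mapsto (1-e^{-a})/a$ is decreasing on $(0,\infty)$: its derivative is $[(a+1)e^{-a} - 1]/a^2$, and the numerator $g(a) := (a+1)e^{-a} - 1$ satisfies $g(0) = 0$ and $g'(a) = -ae^{-a} \leq 0$, so $g \leq 0$ throughout. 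Consequently,
$$\alpha \geq 1 - \frac{1 - e^{-2W(1/e)}}{2W(1/e)}.$$

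A numerical evaluation with $W(1/e) \approx 0.2785$ gives $1 - e^{-2W(1/e)} \approx 0.427$ and $2W(1/e) \approx 0.557$, so $\alpha \geq 0.233$, comfortably larger than $1/9$. There is no serious obstacle in this argument; the only subtlety is recognizing that Bernoulli's inequality on the subinterval $[0,t/q]$ is already sharp enough to clear $1/9$ by a wide margin, so no Laplace-type second-order expansion near $u = 0$ (which one might initially reach for, given that the integrand concentrates there) is actually required.
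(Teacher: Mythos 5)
Your proof is correct and takes a genuinely different route from the paper's. The paper restricts the integral to the subinterval $[t - W(1/e), t]$ near the right endpoint, bounds the integrand from below by its infimum there, obtaining $W(1/e)e^{-W(1/e)}(1 - W(1/e)/t)^q$, and then has to verify monotonicity of $(1-1/x)^x$ and plug in $q=2$ to reach $W(1/e)e^{-W(1/e)}\cdot \tfrac{9}{16}>\tfrac19$. Your approach---substituting $x=t-u$, applying Bernoulli's inequality $(1-u/t)^q \geq 1-qu/t$ on $[0,t/q]$, and evaluating the resulting elementary integral---produces the clean closed-form bound
$$\alpha \geq 1 - \frac{1-e^{-a}}{a}, \qquad a = \frac{t}{q} = \frac re,$$
which depends only on $r$, is monotone in $a$, and already holds for all $q \geq 1$ rather than just $q\geq 2$. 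Numerically it is also substantially stronger ($\approx 0.233$ at $r=2r_0$ versus the paper's $\approx 0.119$ at $q=2$), so it clears $\tfrac19$ with far more margin. In short, both arguments are correct; yours is more elementary, yields a sharper and $q$-uniform constant, and avoids the somewhat ad hoc choice of subinterval width $W(1/e)$ that drives the paper's estimate.
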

\begin{proof}
Observe that $t \geq W(1/e)$. We estimate
$$t^{-q} \int_0^t e^{x-t}x^q dx \geq t^{-q}\int_{t - W(1/e)}^t e^{x-t}x^q dx \geq t^{-q}W(1/e) \inf_{t-W(\frac 1e) \leq x \leq t} e^{x-t}x^q$$
$$ = t^{-q} W(1/e) e^{-W(1/e)} (t - W(1/e))^q = W(1/e)e^{-W(1/e)}\Big(1-\frac{W(1/e)}t\Big)^q.$$
The value of the last expression increases with $t$. We also note that $\frac{W(1/e)}{\frac{2r_0q}{e}} = \frac 1{2q}$. Therefore, for $t \geq \frac{2r_0q}{e}$ we have the estimate
$$W(1/e)e^{-W(1/e)}\Big(1-\frac{W(1/e)}t\Big)^q \geq
W(1/e)e^{-W(1/e)}\Big(1-\frac{1}{2q}\Big)^q
$$
 Observe, that for $x>1$ we have
$$\frac d{dx} \ln\Big(1-\frac 1x\Big)^x = \ln\Big(1-\frac 1x\Big) + \frac{1}{x-1} \geq \frac{-\frac 1x}{1-\frac 1x} + \frac1{x-1} = 0.$$
Here, we have used the fact that $\ln(1+a) \geq \frac a{a+1}$ for $a > -1$. This means that the function $(1-\frac1x)^x$ is increasing for $x >1$. Thus, the same is true for $(1-\frac1x)^{x/2}$, so for $q \geq 2$ (using $x = 2q$) we get
$$W(1/e)e^{-W(1/e)}\Big(1-\frac 1{2q}\Big)^q \geq W(1/e)e^{-W(1/e)}\cdot \frac 9{16} > \frac 19,$$
which verifies the lemma.
\end{proof}

\begin{lemma}\label{Prawa >1}
If $q \geq 2$, $r \geq 2r_0$, then
$$C_0^q\Big(\alpha r^q + e^{-rq/e}g(q)\Big) \geq 1.$$
\end{lemma}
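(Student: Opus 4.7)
The idea is to reparametrize by $u := r/r_0$, which satisfies $u \geq 2$. Writing $w := W(1/e)$, the defining identity $we^w = 1/e$ gives $r_0 = ew$ and $C_0 = 1/r_0 = e^w$, while $r/e = uw$. Substituting these, the inequality to prove becomes
\[
\alpha u^q + e^{-(u-1)qw}\,g(q) \geq 1, \qquad u \geq 2,\ q \geq 2.
\]

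I plan to split on the size of $u$. If $u \geq 9^{1/q}$, then Lemma \ref{Szacowanie alfa} gives $\alpha \geq 1/9$, so the first term alone satisfies $\alpha u^q \geq u^q/9 \geq 1$. Otherwise $u \leq 9^{1/q}$; I will discard the nonnegative first term and show that the second alone is at least $1$. After taking logs this amounts to $(u-1)qw \leq \ln g(q)$, and since $u - 1 \leq 9^{1/q} - 1$, it suffices to prove the purely $q$-dependent inequality
\begin{equation}\label{eq:key}
qw\bigl(9^{1/q} - 1\bigr) \leq \ln g(q), \qquad q \geq 2.
\end{equation}

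For \eqref{eq:key} I will bound both sides by their values at $q = 2$. The left side is decreasing in $q$: substituting $s = 1/q$ turns it into $w(9^s - 1)/s$, and $(9^s - 1)/s$ is increasing in $s > 0$ by convexity of $9^s$, so the LHS is at most $2w(\sqrt{9} - 1) = 4w$. The right side satisfies $\ln g(q) = \tfrac12 \ln(2\pi q) + \mu(q) \geq \tfrac12 \ln(4\pi)$ for $q \geq 2$, using $\mu \geq 0$. So \eqref{eq:key} reduces to the single numerical statement $8W(1/e) \leq \ln(4\pi)$, equivalently $e^{8W(1/e)} \leq 4\pi$. Two rounds of the fixed-point identity $w = e^{-1-w}$ yield first $w < 1/e$, then $w > e^{-1-1/e} > 1/4$, then $w < e^{-5/4} < 0.29$, so $e^{8w} < e^{2.32} < 11 < 4\pi$.

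The main obstacle is confirming that the two regimes actually cover all $u \geq 2$, which is exactly \eqref{eq:key}; I expect the overlap to be comfortable once one reduces to the concrete bound on $W(1/e)$, and the rest of the argument is routine.
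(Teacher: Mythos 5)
Your proof is correct, and it takes a genuinely different and in fact cleaner route than the paper's. The paper breaks the verification into a numerical case analysis on $q$ (first $q\geq 4$, then $3\leq q\leq 4$, then $2\leq q\leq 3$) with further sub-cases on $r$, and in the intermediate regimes it keeps \emph{both} summands on the left-hand side and adds up their contributions by ad hoc estimates. Your argument replaces this with a single dichotomy in the rescaled variable $u=C_0 r=r/r_0\geq 2$: either $u^q\geq 9$, in which case the polynomial term alone gives the result via $\alpha\geq 1/9$ (Lemma~\ref{Szacowanie alfa}, which the paper also invokes), or $u<9^{1/q}$, in which case the exponential term alone suffices and the whole job reduces to the one-parameter inequality $qW(1/e)\bigl(9^{1/q}-1\bigr)\leq\ln g(q)$ for $q\geq 2$. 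You dispose of that by a monotonicity argument — $(9^{s}-1)/s$ is increasing in $s$ by convexity, $\ln g(q)\geq\tfrac12\ln(4\pi)$ by $\mu\geq 0$ — plus a two-step application of the fixed-point identity $w=e^{-1-w}$ to pin down $W(1/e)<0.29$, which yields $e^{8W(1/e)}<11<4\pi$. One small remark on your closing paragraph: the two regimes $u\geq 9^{1/q}$ and $u<9^{1/q}$ tautologically cover all $u\geq 2$; what inequality \eqref{eq:key} actually guarantees is not coverage but that the second regime's estimate succeeds throughout it (in fact that regime is nonempty only for $q<\log_2 9\approx 3.17$, so you prove slightly more than needed). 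The substance is sound.
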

\begin{proof}
First, observe that $C_0^q\Big(\alpha r^q + e^{-rq/e}g(q)\Big) \geq \alpha (C_0r)^q$, $\alpha \geq \frac 19$, so if $(C_0r)^q \geq 9$, then the inequality is true. Recall that $r_0 = \frac 1{C_0}$, so $C_0r \geq 2$ and $q \geq 4$ is enough and if $q \geq 3$, then it is enough if $r \geq \frac{\sqrt[3]9}{C_0}$.

Now we assume $3 \leq q \leq 4$, $r \leq \frac{\sqrt[3]9}{C_0}$.
Remembering that $C_0^{-1} = e^{-W(1/e)} = eW(1/e)$ we get for $r\leq \frac{\sqrt[3]9}{C_0} = \sqrt[3]9 eW(1/e)$ that
$$C_0e^{-r/e} \geq  e^{W(1/e)(1 - \sqrt[3]9)} \geq 0.74 \geq \frac 1{\sqrt 2}.$$
From this, $\Big(C_0e^{-r/e}\Big)^q \geq \frac 14$. Moreover, $g(q) \geq \sqrt{2\pi q} \geq 1$ and for $q \geq 3$ we have $\alpha (C_0r)^q \geq \frac 89$. This means that for $q \geq 3$ and $r \leq \frac{\sqrt[3]9}{C_0}$ we have
$$C_0^q\Big(\alpha r^q + e^{-rq/e}g(q)\Big) \geq \frac 89 + \frac 14 > 1.$$
Now, assume that $2 \leq q \leq 3$. If $C_0r \geq 3$, then $\alpha (C_0r)^q \geq 1$, so  the inequality is true. Now, suppose that $r \leq \frac 3{C_0}$. Then
$$C_0e^{-r/e} \geq e^{W(1/e)}e^{-3W(1/e)} = e^{-2W(1/e)}.$$
Since $q \leq 3$, then  $\Big(C_0e^{-r/e}\Big)^q \geq e^{-6W(1/e)} \geq \frac 16$. Also, $g(q) \geq \sqrt{2 \pi q} \geq 2\sqrt{\pi}$ for $q \geq 2$. This gives $C_0^qe^{-rq/e}g(q) \geq \frac 13 \sqrt{\pi}$. Moreover, $\alpha (C_0r)^q \geq \frac 49$, so it suffices to check that
$\frac 49 + \frac 13 \sqrt{\pi} \geq 1,$ which is true.
\end{proof}

\begin{corollary}\label{Szac r<e}
If $p \geq q \geq 2$, $2r_0 \leq r \leq e$, $1 \leq \lambda \leq 2$, then
$$\Big(\alpha\Big(\frac r\lambda\Big)^{q\lambda} + \sqrt{\lambda}e^{-rq/e}g(q)\Big)^{1/\lambda} \leq C_0^q\Big(\alpha r^q + e^{-rq/e}g(q)\Big).$$
\end{corollary}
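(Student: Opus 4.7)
The plan is to set $A := \alpha r^q$ and $B := e^{-rq/e}g(q)$, so that the claim to prove becomes $(\alpha(r/\lambda)^{q\lambda} + \sqrt{\lambda}\,B)^{1/\lambda} \leq C_0^q(A+B)$. Raising to the $\lambda$-th power (valid since $\lambda \geq 1$ and both sides are nonnegative) and using the algebraic identity $\alpha(r/\lambda)^{q\lambda} = A^\lambda/(\alpha^{\lambda-1}\lambda^{q\lambda})$, the inequality is equivalent to
\[
\frac{A^\lambda}{\alpha^{\lambda-1}\lambda^{q\lambda}} + \sqrt{\lambda}\,B \;\leq\; C_0^{q\lambda}(A+B)^{\lambda}.
\]

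The key step is to apply Bernoulli's inequality to the right-hand side: since $\lambda \geq 1$ and $A, B \geq 0$, one has $(A+B)^\lambda \geq A^\lambda + \lambda A^{\lambda-1}B$. Hence it suffices to verify the two coefficient inequalities obtained by matching the $A^\lambda$-term and the $B$-term separately, namely
\[
\mathrm{(i)}\quad (C_0\lambda)^{q\lambda}\alpha^{\lambda-1}\geq 1,
\qquad
\mathrm{(ii)}\quad \sqrt{\lambda}\,C_0^{q\lambda}A^{\lambda-1}\geq 1.
\]

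For (i), Lemma \ref{Szacowanie alfa} gives $\alpha \geq 1/9$, reducing the claim to $q\lambda\ln(C_0\lambda) \geq (\lambda-1)\ln 9$. Since $C_0\lambda \geq C_0 > 1$ the left-hand side is increasing in $q$, so it suffices to check the case $q=2$; the resulting one-variable inequality $2\lambda\ln(C_0\lambda) \geq (\lambda-1)\ln 9$ on $[1,2]$ is positive at $\lambda=1$ (equal to $2\ln C_0$) and an elementary derivative check shows monotonic behavior on $[1,2]$.

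For (ii), combining $\alpha \geq 1/9$ with the hypothesis $r \geq 2r_0 = 2/C_0$ yields the lower bound $A \geq (2/C_0)^q / 9$, so the inequality reduces to $\tfrac12\ln\lambda + q\ln C_0 + q(\lambda-1)\ln 2 \geq (\lambda-1)\ln 9$. The left side is linearly increasing in $q$ (the slope $\ln C_0 + (\lambda-1)\ln 2$ is positive), so checking at $q=2$ suffices. The resulting inequality is trivial at $\lambda=1$ and, at $\lambda = 2$, reduces to the numerical check $\tfrac12\ln 2 + 2\ln C_0 + 2\ln 2 \geq \ln 9$, which holds with a small positive margin. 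A one-variable concavity or derivative analysis then closes the interval $[1,2]$.

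The main obstacle is that item (ii) at the corner $q=2,\lambda=2$ is quite tight (margin $\approx 0.09$), so the lower bound $\alpha \geq 1/9$ from Lemma \ref{Szacowanie alfa} is just barely strong enough; a significantly weaker lower bound on $\alpha$ or on $A$ would fail to close the gap. Aside from this numerical sharpness at the boundary, the argument is a clean two-step reduction: Bernoulli on $(A+B)^\lambda$ followed by term-by-term comparison.
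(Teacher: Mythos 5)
Your proposal is correct, and it takes a genuinely different route from the paper's. The paper dispenses with the $1/\lambda$-exponent by first invoking Lemma~\ref{Prawa >1} (the right-hand side is at least $1$), so that when the left-hand side exceeds $1$ one may use $x^{1/\lambda}\leq x$; it then closes the gap with the two pointwise bounds $C_0^q\geq\sqrt\lambda$ (from $q\geq 2$, $\lambda\leq 2$) and $(r/\lambda)^{q\lambda}\leq r^q$, the latter being where $r\leq e$ enters. Your version raises the inequality to the $\lambda$-th power, rewrites $\alpha(r/\lambda)^{q\lambda}=A^\lambda/(\alpha^{\lambda-1}\lambda^{q\lambda})$ with $A=\alpha r^q$, applies Bernoulli $(A+B)^\lambda\geq A^\lambda+\lambda A^{\lambda-1}B$, and matches coefficients; the two resulting scalar inequalities then follow from Lemma~\ref{Szacowanie alfa} ($\alpha\geq 1/9$), $r\geq 2r_0=2/C_0$, and elementary monotonicity in $q$ and $\lambda$ (concavity of the $\lambda=1,2$ endpoint reduction). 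Two features distinguish your argument: it does not invoke Lemma~\ref{Prawa >1} at all, and it does not use the hypothesis $r\leq e$, so in fact it proves the inequality for all $r\geq 2r_0$. The price is that it is numerically tight at the corner $q=2$, $\lambda=2$, $r=2r_0$, where (ii) yields $\tfrac{4\sqrt 2\,C_0^2}{9}\approx 1.10>1$; the paper's proof, leaning on Lemma~\ref{Prawa >1} which was established precisely for this step, is shorter and carries more slack.
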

\begin{proof}
In the previous lemma we have shown, that the right hand side is not less than $1$. If the left hand side is less than $1$, then the inequality is obviously true. In the other case,
$$\Big(\alpha\Big(\frac r\lambda\Big)^{q\lambda} + \sqrt{\lambda}e^{-rq/e}g(q)\Big)^{1/\lambda} \leq \alpha\Big(\frac r\lambda\Big)^{q\lambda} + \sqrt{\lambda}e^{-rq/e}g(q).$$
Since $q \geq 2$ i $\lambda \leq 2$, we see that $C_0^q \geq C_0^2 > \sqrt 2 \geq \sqrt \lambda$. Also, $\big( \frac r{\lambda}\big)^\lambda$ is decreasing with respect to $\lambda \in [1,2]$, because
$$\frac d{d\lambda} \ln \Big( \frac r{\lambda}\Big)^\lambda = 
\frac d{d\lambda} \Big(\lambda \ln r - \lambda \ln \lambda\Big) =
\ln r - \ln \lambda - 1 \leq -\ln \lambda \leq 0.$$
Thus
$$\alpha\Big(\frac r\lambda\Big)^{q\lambda} + \sqrt{\lambda}e^{-rq/e}g(q) \leq
\alpha r^q + C_0^qe^{-rq/e}g(q) < C_0^q\Big(\alpha r^q + e^{-rq/e}g(q)\Big).$$
\end{proof}

Thus, we have proven the inequality in the case of $r \leq e$, which is the same as $t \leq q$. The remaining case is $t \geq q$.

\begin{lemma} \label{szacowanie 2. moment}
If $X$ has an exponential distribution with parameter $1$, $p \geq q \geq 2$, $t \geq q$ and $\lambda = \frac pq \geq \lambda_0 = \frac{\sqrt 2}{C_0}$, then
$$\|X-t\|_p \leq C_0 \frac pq \|X-t\|_q.$$
$(\lambda_0 \approx 1.07 < 1.1)$.
\end{lemma}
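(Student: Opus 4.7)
My plan is to combine the reduction observation preceding Lemma \ref{Szacowanie alfa} with Bernoulli's inequality. Since Lemma \ref{duzy iloraz} already covers the case $\lambda \geq 2$, I may assume $\lambda_0 \leq \lambda \leq 2$. The hypothesis $t \geq q$ gives $r = te/q \geq e > 2r_0$, so Lemma \ref{Szacowanie alfa} yields $\alpha \geq 1/9$, and the reduction reduces the lemma to proving
\[
\Big(\alpha(r/\lambda)^{q\lambda} + \sqrt{\lambda}\,e^{-rq/e}g(q)\Big)^{1/\lambda} \leq C_0^q\Big(\alpha r^q + e^{-rq/e}g(q)\Big).
\]

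Raising both sides to the $\lambda$-th power, I would apply Bernoulli's inequality $(A+B)^\lambda \geq A^\lambda + \lambda A^{\lambda-1}B$ (valid for $\lambda \geq 1$ and $A,B \geq 0$) to the right-hand side, with $A = \alpha r^q$ and $B = e^{-rq/e}g(q)$. The left-hand side, after raising to the $\lambda$-th power, splits naturally into two terms which match the two summands of the Bernoulli lower bound, so it would suffice to establish
\[
(a)\ \ \alpha(r/\lambda)^{q\lambda} \leq C_0^{q\lambda}(\alpha r^q)^\lambda,
\qquad
(b)\ \ \sqrt{\lambda} \leq C_0^{q\lambda}\,\lambda\,(\alpha r^q)^{\lambda-1},
\]
where in $(b)$ I have already cancelled the common factor $e^{-rq/e}g(q)$.

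Inequality $(a)$ simplifies to $\alpha^{1-\lambda} \leq (C_0\lambda)^{q\lambda}$; using $\alpha \geq 1/9$ and $q \geq 2$, it is enough to show that $h_a(\lambda) := 2\lambda\ln(C_0\lambda) - (\lambda-1)\ln 9$ is nonnegative on $[\lambda_0, 2]$, which follows from $h_a(1) = 2\ln C_0 > 0$ together with the positivity of $h_a'(\lambda) = 2\ln(C_0\lambda) + 2 - \ln 9$ for $\lambda \geq 1$. Inequality $(b)$ rewrites as $(\alpha r^q)^{\lambda-1} \geq \lambda^{-1/2}C_0^{-q\lambda}$; for $q \geq 3$ one has $\alpha r^q \geq e^3/9 > 1$ while the right-hand side is at most $1$, so $(b)$ is immediate, and for $q = 2$ the bound $\alpha r^q \geq e^2/9$ reduces the problem to a similar one-variable positivity check for $h_b(\lambda) := \tfrac12\ln\lambda + 2\lambda\ln C_0 - (\lambda-1)(\ln 9 - 2)$ on $[\lambda_0, 2]$.

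The main difficulty I anticipate lies in choosing the correct splitting of $(A+B)^\lambda$. The cruder bound $(A+B)^\lambda \geq A^\lambda + B^\lambda$ fails, because $B = e^{-rq/e}g(q)$ decays exponentially in $r$ while the left-hand side carries a term $\sqrt{\lambda}\,B$ of the same exponential order that cannot be absorbed into $C_0^{q\lambda}B^\lambda$ for large $r$. Bernoulli's inequality introduces precisely the linear cross-term $\lambda A^{\lambda-1}B$ needed to dominate $\sqrt{\lambda}\,B$; the tightness of the estimate is dictated by the case $q = 2$ with $r$ close to $e$, where $A = \alpha r^q$ drops just below $1$ so that $(\alpha r^q)^{\lambda-1}$ works against us, and this is exactly the regime for which the threshold $\lambda_0 = \sqrt{2}/C_0$ has been calibrated.
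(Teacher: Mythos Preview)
Your approach works and is genuinely different from the paper's. One small oversight: in (b) you split into $q \ge 3$ and $q = 2$, omitting $2 < q < 3$; this is harmless, since after taking logarithms and using $\alpha r^q \ge e^q/9$ the inequality becomes $q[(\lambda-1)+\lambda\ln C_0] + \tfrac12\ln\lambda - (\lambda-1)\ln 9 \ge 0$, which is increasing in $q$, so $q = 2$ is already the worst case and your $h_b$ check covers everything. Note also that both $h_a$ and $h_b$ are positive from $\lambda = 1$ onward, so your argument actually delivers the conclusion for every $\lambda \in [1,2]$, not just $\lambda \ge \lambda_0$; your closing remark that $\lambda_0$ is ``calibrated'' for the tight case of (b) is therefore misplaced.

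The paper's proof is far more direct and bypasses the Observation, Lemma~\ref{Szacowanie alfa}, and Bernoulli entirely. It simply combines the crude bound $\|X-t\|_p \le \max\bigl(t,\Gamma(p+1)^{1/p}\bigr)$ from the proof of Lemma~\ref{duzy iloraz} with the second-moment lower bound $\|X-t\|_q \ge \|X-t\|_2 = \sqrt{1+(t-1)^2} \ge t/\sqrt 2$. If the maximum is $\Gamma(p+1)^{1/p} \le p/\sqrt 2$, then $C_0\,(p/q)\,\|X-t\|_q \ge (p/q)\sqrt{q^2-2q+2} \ge p/\sqrt 2$ already suffices; if the maximum is $t$, the requirement $C_0\lambda \cdot t/\sqrt 2 \ge t$ is precisely $\lambda \ge \sqrt 2/C_0 = \lambda_0$. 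So in the paper the threshold $\lambda_0$ drops out of a two-line comparison with $\|X-t\|_2$, while your route trades that elegance for more machinery and a marginally stronger statement.
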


\begin{proof}
Since $q \geq 2$, we see that $\|X-t\|_q \geq \|X-t\|_2 = \sqrt{\Var X + (\Ex X - t)^2} = \sqrt{1 + (t-1)^2}$. Form this, $\|X-t\|_q \geq \sqrt{t^2-2t+2} \geq \sqrt{q^2-2q+2}$.

By reasoning from the proof of Lemma \ref{duzy iloraz}. we know that $\|X-t\|_p \leq \max(t, \Gamma(p+1)^{1/p})$. By Corollary \ref{gamma}, the function $p^{-1}\Gamma(p+1)^{1/p}$ is decreasing, so $\Gamma(p+1)^{1/p} \leq p\frac{\sqrt{\Gamma(3)}}2 = \frac p{\sqrt 2}$. 
By calculating the derivative, we can see that for $x \geq 2$ the function $f(x) = \frac{x^2-2x+2}{x^2}$ is nondecreasing, so for $x \geq 2$ its value is at least $f(2) = \frac 1{2}$.
Therefore, for $q \geq 2$,
$$C_0\frac pq \|X-t\|_q \geq p \frac{\|X-t\|_2}q \geq p \sqrt{\frac{q^2-2q+2}{q^2}} \geq \frac p{\sqrt 2}.$$
This proves the inequality in the case of $\max(t, \Gamma(p+1)^{1/p}) = \Gamma(p+1)^{1/p}$. On the other hand,
$$\frac 1t C_0 \frac pq \|X-t\|_q \geq C_0 \frac pq \frac{\|X-t\|_2}{t} =
C_0 \lambda \sqrt{\frac{t^2-2t+2}{t^2}} \geq \frac{C_0}{\sqrt 2} \lambda.$$which proves the lemma.
\end{proof}

To summarize, we have proven the Theorem \ref{thm:bestConst}.
for $p \geq q \geq 2$, $\lambda = \frac pq \geq 1$ in the following cases
\begin{itemize}
    \item $\frac pq \geq 2$ (Lemma \ref{duzy iloraz}),
    \item $\frac pq \leq 2$, $t \leq q$ (Corollary \ref{NierMale r}, Observation, Lemma \ref{Szac r<e}),
    \item $\frac pq \geq 1.1$, $t \geq q$ (Lemma \ref{szacowanie 2. moment}). 
\end{itemize}
It remains to check the inequality when $1 \leq \lambda = \frac pq \leq 1.1$, $t = r \frac qe \geq q$, $r \geq e \geq 2r_0$. For this purpose, recall the inequality which is enough to prove the Theorem \ref{thm:bestConst}.
$$\Big(\alpha\Big(\frac r\lambda\Big)^{q\lambda} + \sqrt{\lambda}e^{-t}g(q)\Big)^{1/\lambda} \leq C_0^q\Big(\alpha r^q + e^{-t}g(q)\Big).$$
We will show a stronger inequality
\begin{equation}\label{pom3}
    \Big(\alpha\Big(\frac r\lambda\Big)^{q\lambda} + \sqrt{\lambda}e^{-t}g(q)\Big)^{1/\lambda} \leq C_0^q\alpha r^q.
\end{equation}
In order to do this, we will first bound from above the ratio of $\sqrt \lambda e^{-t}g(q)$ to $\alpha \big(\frac r\lambda\big)^{q\lambda}$. Using the estimates $\alpha \geq \frac19$, $r \geq e$ i $t \geq q$ we get
\begin{equation}\label{pom5}
\frac{\sqrt \lambda e^{-t}g(q)}{ \alpha \big(\frac r\lambda\big)^{q\lambda}} \leq 9 \sqrt \lambda e^{-q}g(q) \Big(e^{-\lambda} \lambda^{\lambda}\Big)^q.
\end{equation}
The function $e^{-q}\sqrt q$ decreases with $q$ for $q \in [2, \infty) $, so $e^{-q}g(q)$ also decreases with $q$ on this interval. Moreover, $e^{-\lambda}\lambda^\lambda < 1$, so the value of the right hand side of \eqref{pom5} decreases with $q$ and we can estimate by substituting $q = 2$. On the other hand, $\frac d{d\lambda} \ln(e^{-\lambda}\lambda^\lambda) = \ln \lambda \geq 0$, $\sqrt \lambda$ increases with  $\lambda$, so we can estimate substituting $\lambda = 1.1$
$$9 \sqrt \lambda e^{-q}g(q) \Big(e^{-\lambda} \lambda^{\lambda}\Big)^q \leq 9\sqrt{1.1} e^{-2}g(2) \Big(e^{-1.1} 1.1^{1.1}\Big)^2 < 0.65.$$
Therefore,
\begin{equation}\label{pom4}
    \Big(\alpha\Big(\frac r\lambda\Big)^{q\lambda} + \sqrt{\lambda}e^{-t}g(q)\Big)^{1/\lambda} \leq \Big(1.65\alpha\Big(\frac r\lambda\Big)^{q\lambda}\Big)^{1/\lambda} = (1.65\alpha)^{1/\lambda}r^q \lambda^{-q}.
\end{equation}
We will show that the last expression is decreasing with respect to $\lambda$. Indeed,
$$\frac d{d\lambda} (1.65\alpha)^{1/\lambda}\lambda^{-q} = (1.65\alpha)^{1/\lambda}\lambda^{-q-2}\big(-q\lambda-\ln(1.65\alpha)\big).$$
We assume that $q \geq 2$, $\lambda \geq 1$, so $-q\lambda \leq -2$. On the other hand, $\alpha \geq \frac 19$, so $1.65\alpha \geq 1.65/9 > e^{-2}$ and $-\ln(1.65\alpha) < 2$. From this, the derivative is negative, so the expression is indeed decreasing with respect to $\lambda$ and we can estimate by setting $\lambda=1$. Then, by \eqref{pom4}
$$\Big(\alpha\Big(\frac r\lambda\Big)^{q\lambda} + \sqrt{\lambda}e^{-t}g(q)\Big)^{1/\lambda} \leq 1.65\alpha r^q.$$
But $C_0^2 > 1.65$, so
$$1.65 \alpha r^q < C_0^2 \alpha r^q \leq C_0^q \alpha r^q.$$
From this, we proved \eqref{pom3}, which finishes the proof of the Theorem \ref{thm:bestConst}. It remains to see that $C_0$ is the best possible constant. This is true by the following lemma.

\begin{lemma}\label{Szacowanie z dolsu}
For any constant $C < C_0$ there exist $p\geq q\geq 2$, $t > 0$, such that $\|X-t\|_p > C\frac pq \|X-t\|_q$.
\end{lemma}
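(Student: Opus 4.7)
My plan is to exhibit a family $(p,q,t)$ along which the ratio $\|X-t\|_p/((p/q)\|X-t\|_q)$ can be pushed arbitrarily close to $C_0$. Set $c := W(1/e)$, so that $ce^c = 1/e$ and $C_0 = e^c$, and take $t = t_q := cq$ with $q \to \infty$; for each such $q$, I will then let $p \to \infty$. The idea is that this is exactly the threshold scaling where, in the formula $\|X-t\|_q^q = I(q,t) + e^{-t}\Gamma(q+1)$, the two terms become comparable to leading order, which is precisely what forced the constant $C_0$ in Lemma \ref{duzy iloraz}.

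First I would analyze $\|X-t_q\|_q$ as $q \to \infty$. The defining identity $ce^c = 1/e$ gives the clean algebraic consequence $t_q^q = (cq)^q = e^{-cq}(q/e)^q = e^{-t_q}(q/e)^q$. Combined with Stirling (Section \ref{section:Stirling}), this yields
\[
e^{-t_q}\Gamma(q+1) = t_q^q \sqrt{2\pi q}\, e^{\mu(q)},
\]
so the second term in $\|X-t_q\|_q^q$ dominates $I(q,t_q) \leq t_q^q$ by a factor of order $\sqrt{q}$. Therefore $\|X-t_q\|_q^q \sim e^{-t_q}\Gamma(q+1)$, and taking $q$-th roots together with $(2\pi q)^{1/(2q)} e^{\mu(q)/q} \to 1$ gives
\[
\frac{\|X-t_q\|_q}{q/e} \;\longrightarrow\; e^{-c} = \frac{1}{C_0} \qquad (q \to \infty).
\]

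Next, for fixed $q$ (hence fixed $t = t_q > 0$) I would send $p \to \infty$. The bound $I(p,t) \leq t^p$ and the inequality $e^{-t}\Gamma(p+1) \geq e^{-t}(p/e)^p$ (from Stirling) give $I(p,t)/(e^{-t}\Gamma(p+1)) \leq (te/p)^p e^t \to 0$, so $\|X-t\|_p^p \sim e^{-t}\Gamma(p+1)$. Using $e^{-t/p} \to 1$ and $\Gamma(p+1)^{1/p} \sim p/e$ one obtains $\|X-t\|_p \cdot q/p \to q/e$, whence
\[
\frac{\|X-t_q\|_p}{(p/q)\,\|X-t_q\|_q} \;\longrightarrow\; \frac{q/e}{\|X-t_q\|_q} \qquad (p \to \infty).
\]
By the previous step, the right-hand side tends to $C_0$ as $q \to \infty$.

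To finish, given any $C < C_0$, I first pick $q \geq 2$ large enough that $(q/e)/\|X-t_q\|_q > (C+C_0)/2 > C$, and then pick $p \geq q$ large enough that $\|X-t_q\|_p/((p/q)\|X-t_q\|_q) > C$; this triple $(p,q,t_q)$ satisfies the claim. The only delicate point is confirming that the two limits can be taken in succession (rather than simultaneously), but since the lemma only requires existence of a single triple $(p,q,t)$ and not any uniformity, the iterated limit above suffices and no diagonal argument is required.
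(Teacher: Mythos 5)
Your proof is correct and follows essentially the same route as the paper: you choose $t = W(1/e)\,q$ (the paper writes this as $t = r_0 q/e$ with $r_0 = eW(1/e)$, which is the same since $r_0/e = W(1/e)$), exploit the resulting identity $t^q = e^{-t}(q/e)^q$ together with Stirling's formula to show $\|X-t\|_q \sim (q/e)/C_0$, lower-bound $\|X-t\|_p$ by $e^{-t/p}\Gamma(p+1)^{1/p} \sim p/e$ for fixed $t$ and $p \to \infty$, and take the iterated limit $q \to \infty$, $p \to \infty$.
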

\begin{proof}

Let $t = \frac {r_0}e q$. Then $t^q = e^{-t} \big(\frac qe\big)^q$. We have 
$$\Ex|X-t|^q= I(q, t) + e^{-t}\Gamma(q+1) \leq t^q + e^{-t}\Gamma(q+1)
= \Big(\frac qe\Big)^q e^{-t} (1+ g(q)).$$
Thus,
$$\|X-t\|_q \leq \frac qe e^{-\frac {r_0}e}(1+g(q))^{1/q} = \frac qe \frac 1{C_0}\Big(1+g(q)\Big)^{1/q}.$$
 Because $1 + g(q) = O(\sqrt q)$, we see that $(1 + g(q))^{1/q}$ tends to $1$ with $q \rightarrow \infty$. This means that for any $\varepsilon > 0$ for sufficiently large $q$ we can bound this expression from above by $1 + \varepsilon$. We will also use the estimate $\|X-t\|_p \geq e^{-t/p}\Gamma(p+1)^{1/p} \geq e^{-t/p} \frac pe$. With fixed $t$ for any $\varepsilon > 0$ for sufficiently large $p$ we have $e^{-t/p} \geq 1 - \varepsilon$. Thus, for sufficiently large $p \gg q \gg 2$
$$\frac{\|X-t\|_p}{\|X-t\|_q} \geq C_0 \frac pq \cdot \frac{1-\varepsilon}{1+\varepsilon}.$$
Finally, we see that for any $C<C_0$ for sufficiently small $\varepsilon$ we have $\frac{1-\varepsilon}{1+\varepsilon} C_0 \geq C$.

\end{proof}

\noindent  Faculty of Mathematics, Informatics and Mechanics \\ 
University of Warsaw \\ 
Banacha 2, 02-097, Warsaw, Poland \\
email: dk.murawski@student.uw.edu.pl

\end{document}